\documentclass[12pt,letterpaper]{amsart}
\usepackage{cite}
\usepackage{graphicx}
\usepackage[T1]{fontenc}
\usepackage{amssymb,amscd,amsthm,amsxtra}
\usepackage{latexsym}
\usepackage{epsfig}
\usepackage{esint}
\usepackage{enumerate}
\usepackage{enumitem}
\usepackage{esint}
\usepackage{enumitem}
\usepackage{amsmath}
\usepackage{bm}
\usepackage{amssymb}
\usepackage{amsthm}
\usepackage[usenames,dvipsnames,svgnames,table]{xcolor}
\usepackage[linktocpage=true,colorlinks=true,linkcolor=Blue,citecolor=BrickRed,urlcolor=RoyalBlue]{hyperref}
\usepackage{comment}
\usepackage[colorinlistoftodos,prependcaption,textsize=tiny]{todonotes}
\usepackage{caption}
\usepackage{amscd}
\usepackage[mathscr]{euscript} 
\usepackage{mathtools, verbatim}
\usepackage[utf8]{inputenc}
\usepackage{indentfirst}
\usepackage{enumerate}
\usepackage[colorlinks=true]{hyperref}
\hypersetup{urlcolor=blue, citecolor=red}
\usepackage{graphicx}
\usepackage[usenames,dvipsnames]{pstricks}
\usepackage{epsfig}
\usepackage{pst-grad} 
\usepackage{pst-plot} 
\usepackage{ifthen}
\usepackage{color, xcolor}

\usepackage{mathrsfs} 

\usepackage{pifont}

\newlist{thmlist}{enumerate}{1}
\setlist[thmlist]{label=(\alph{thmlisti}), ref=\thethm.(\alph{thmlisti}),noitemsep}

\usepackage{thmtools}

\declaretheorem[
    name=Theorem,
    Refname={Theorem,Theorems},
    numberwithin=section]{thm}
\declaretheorem[
    name=Lemma,
    Refname={Lemma,Lemmas},
    sibling=thm]{lem}

    \declaretheorem[
    name=Proposition,
    Refname={Proposition,Propositions},
    sibling=thm]{prop}

\usepackage{nameref,hyperref}
\usepackage[capitalize]{cleveref}

\Crefname{thm}{Theorem}{Theorems}
\Crefname{lem}{Lemma}{Lemmas}

\makeatletter
\newcommand{\mylabel}[2]{#2\def\@currentlabel{#2}\label{#1}}
\makeatother

\numberwithin{equation}{section} 

\newtheorem{theorem}{Theorem}

\newtheorem{definition}{Definition}
\newtheorem{lemma}{Lemma}
\newtheorem{corollary}{Corollary}

\theoremstyle{remark}
\newtheorem{remark}{Remark}[section]

\newcommand{\D}{\mathrm{div }}

\def \suchthat {\ \big | \ }

\title[Semilinear Equations with Singular Potentials]{Geometric Estimates for Solutions of Semilinear Equations with Singular Potentials}

\author[T. M. Nascimento and Lei Zhang]{Thialita M. Nascimento and Lei Zhang }
\address{Department of Mathematics, University of Florida, Little Hall, Gainesville, FL 32601}{}
\email{t.nascimento@ufl.edu}

\email{leizhang@ufl.edu}

\subjclass{ 35J60, 35B65, 46E30}
\keywords{Optimal regularity, singular elliptic equations, Lorentz spaces}

\thanks{The second author is partially supported by Simons Foundation Collaboration Grant SFI-MPS-TSM-00013752. 
}

\begin{document}

\date{} 

\begin{abstract} 

In this work, we study local minimizers of elliptic functionals with strong absorption terms and unbounded, sign-changing sources. These problems naturally interpolate between two classical free boundary problems: Bernoulli-type (cavity) and obstacle-type. While previous studies  have focused on bounded and strictly positive sources, we extend sharp regularity and nondegeneracy estimates to the unbounded, sign-changing setting, providing a comprehensive analysis of how the underlying nonlinearity interacts with minimal integrability assumptions on the source.

\tableofcontents

\end{abstract}
\maketitle

\section{Introduction}

In this work we study nonnegative solutions of the  semilinear elliptic equation
\begin{equation}\label{main equation}
   \mathrm{div}(A(x)\nabla u) = \gamma f(x) u^{\gamma - 1}
   \quad \text{in } \{ u > 0\} \cap B_1 ,
\end{equation}
where $\gamma \in (0,1)$, $A(x)$ is a uniformly elliptic matrix with measurable coefficients, and 
$f$ belongs to the Lorentz-space $ L^{p,\infty}(B_1)$. 
The nonlinearity $u^{\gamma-1}$ introduces a singular absorption term as $u\to 0$; as a consequence,
solutions may develop a free boundary separating the positivity set $\{u>0\}$ from the zero set $\{u=0\}$.

Equations of this type arise naturally in physical contexts such as combustion 
models, flow through porous media, and chemical reaction processes. From a 
mathematical perspective, problems of the form \eqref{main equation} 
interpolate between two classical free boundary regimes. In the limiting case 
$\gamma = 0$, the equation reduces to a cavity-type free boundary problem, 
which is closely related to the Bernoulli problem studied by Alt and Caffarelli 
\cite{AltCaff81}. Conversely, when $\gamma = 1$, the absorption term behaves 
as the characteristic function $\chi_{\{u > 0\}}$, and the problem becomes 
one of obstacle type. In this sense, equation \eqref{main equation} provides a natural 
bridge between these two fundamental and well-studied classes of free 
boundary problems.

The classical model associated with \eqref{main equation}, corresponding to the case $A \equiv I$ and $f$
positive and bounded, was introduced and studied by Alt and Phillips in \cite{AltPhilips86}.
Under these assumptions, solutions are locally smooth, and the free boundary $\partial\{u>0\}$
enjoys a rich and well-understood geometric structure.
Since then, a large body of work has extended this theory in various directions
(see, e.g., \cite{AraujoTeix2013, Teix2018, MR3980852, AraujoSa22, ATV23}),
but invariably under strong structural assumptions on the source term $f$.
In contrast, many contemporary models involve highly irregular, unbounded,
or even sign-changing sources, for which the classical regularity theory no longer applies
\cite{MR1025884, MR2506429, MR3721865}.

Motivated by these considerations, we study \eqref{main equation} under the  weaker assumption
\[
f \in L^{p,\infty}(B_1),
\]
with no sign restriction on $f$. The Lorentz-space condition identifies the critical integrability  scale under which the singular absorption term remains controlled.

This paper is organized as follows. Section \ref{sct prelim} we introduce the 
mathematical framework and basic tools used throughout the text, and state the main results. In 
Section \ref{sct existence}, we establish the existence and boundedness of 
nonnegative weak solutions. Section \ref{sct local reg} proves the local 
H\"older regularity of minimizers, extending classical results to the 
setting of Lorentz-space sources. Section \ref{sct sharp reg} 
focuses on regularity estimates at the free boundary, yielding sharp growth 
rates. Section \ref{sct nondeg} addresses nondegeneracy by providing 
quantitative lower bounds near the free boundary. Finally, Section 
\ref{sct radial example} illustrates these findings with a radial example 
that demonstrates the sharpness of our estimates.

\section{Assumptions and Main results}\label{sct prelim}

In this section, we introduce the notation, assumptions, and functional framework used throughout the paper.  And we state and comment the main results of this paper. 

\subsection{Notation and assumptions}
Throughout this paper, we denote by $\mathbb{R}^n$ the $n$-dimensional 
Euclidean space and consider an open domain $\Omega \subset 
\mathbb{R}^n$. For a point $x_0 \in \mathbb{R}^n$ and a radius $r > 0$, 
we let $B_r(x_0)$ represent the open ball $\{x \in \mathbb{R}^n : |x - 
x_0| < r\}$. When $x_0 = 0$, we simply write $B_r$. 

We recall that for $p \ge 1$, a measurable function $f$ belongs to the space $L^{p,\infty}(\Omega)$ (also known as the weak $L^p$ or Lorentz space) if
\[
  \| f \|_{L^{p, \infty}(\Omega)} 
  := \sup_{t > 0} \, t \, \big| \{ x \in \Omega : |f(x)| > t \} \big|^{1/p} < \infty,
\]
where $|E|$ denotes the $n$-dimensional Lebesgue measure of a measurable 
set $E \subset \mathbb{R}^n$. These spaces provide a natural framework for handling source terms with  singular growth and arise frequently in the analysis of variational  functionals with singular weights.

 \medskip

\noindent{\bf The variational problem:} We denote by $\mathcal{M}_{\lambda,\Lambda}(B_1)$ the class of symmetric matrix-valued functions
$A : B_1 \to \mathrm{Sym}(n)$ that are uniformly elliptic, i.e.,
\[
\lambda |\xi|^2 \le \langle A(x)\,\xi, \xi\rangle \le \Lambda |\xi|^2 
\quad \text{for all } \xi \in \mathbb{R}^n \text{ and a.e. } x \in B_1,
\]
for some fixed constants $0<\lambda \le \Lambda < \infty$.

In this paper, we consider minimizers of the functional
\begin{equation}\label{main functional}
J_{A,f} (v) := \int_{B_1} \frac{1}{2} \langle A(x) \nabla v, \nabla v \rangle + f(x) (v^+)^\gamma \, dx,
\end{equation}
where $A(x) \in \mathcal{M}_{\lambda,\Lambda}(B_1)$, $\gamma \in (0,1)$, and $f\in L^{p,\infty}(B_1)$, for $p > \frac{n}{2}.$ The minimum is taken over the admissible set
\[
\mathbb{K} :=  \left\{ v \in W^{1,2}(B_1) : v - g \in W_0^{1,2}(B_1) \right\},
\]
where $g \in H^1(B_1)$ is a given boundary datum. For simplicity, we shall write $J$ instead of $J_{A,f}$ whenever there is no ambiguity.  

Throughout the paper, we will assume that the boundary data $g$ satisfies 
\begin{equation}\label{boundary data assumptions}
g \in C(\overline{B_1}), \quad g \ge 0, \; g \not\equiv 0,
\end{equation}
so as to avoid technicalities related to traces and
representatives on $\partial B_1$,  unless otherwise stated.

\medskip
\subsection{Main Results}

The novelty of our work lies in extending the classical theory of Alt and Phillips, \cite{AltPhilips86}, to the setting of Lorentz-type sources, which includes singular potentials of the form $f(x) \sim |x-y_0|^{-n/p}$.  
A fundamental question is whether the regularity of minimizers is preserved in this singular setting. While local H\"older continuity is well-established  for bounded sources $f$, our first main result shows that this property persists for 
$f \in L^{p,\infty}(B_1)$, establishing the necessary analytic foundation 
for investigating the free boundary:

\begin{theorem}[Local H\"older Regularity]\label{Holder reg}
Let $u$ be a local minimizer of \eqref{main functional}. There exist universal constants $0 < \beta \ll 1$ and $C_2 > 0$, such that $u \in C^{0,\beta} (B_{1/2})$ and 
\begin{equation}
    \| u \|_{C^{0, \beta}(B_{1/2})} \le C_2 \| u \|_{L^2 (B_1)}.
\end{equation}
\end{theorem}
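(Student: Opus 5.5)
We prove the Hölder estimate by a Campanato/Morrey-type energy decay argument built on comparison with $A$-harmonic replacements. Throughout, $u$ is a local minimizer of $J$ in $B_1$, and the constant $\beta$ will come from De Giorgi–Nash–Moser.

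\textbf{Step 1 (comparison inequality).} Fix $B_r(x_0)\subset B_{3/4}$. Let $h$ be the $A$-harmonic replacement of $u$ in $B_r(x_0)$, i.e.\ $\mathrm{div}(A\nabla h)=0$ with $h-u\in W^{1,2}_0(B_r(x_0))$. By the maximum principle $h\ge 0$. Minimality gives $J(u)\le J(h)$ on $B_r(x_0)$. Since $h$ is $A$-harmonic and $u-h\in W^{1,2}_0$,
\[
\frac{\lambda}{2}\int_{B_r}|\nabla(u-h)|^2\le \frac12\int_{B_r}\big(\langle A\nabla u,\nabla u\rangle-\langle A\nabla h,\nabla h\rangle\big)\le \int_{B_r}|f|\,\big|(h^+)^\gamma-(u^+)^\gamma\big|.
\]
Use $|a^\gamma-b^\gamma|\le |a-b|^\gamma$ for $a,b\ge 0$. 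Then Hölder's inequality in Lorentz spaces, noting $|f|\in L^{p,\infty}\subset L^{q}$ for every $q<p$, gives
\[
\int_{B_r}|f||u-h|^\gamma\le C\|f\|_{L^{p,\infty}}\,r^{n(1-\frac1p)-\frac{n\gamma}{2^*}\cdot}\cdots
\]
Concretely, bound $\int|f||w|^\gamma\le \|f\|_{L^{q}(B_r)}\|w\|^\gamma_{L^{2^*}}|B_r|^{1-\frac1q-\frac{\gamma}{2^*}}$ with $\frac n2<q<p$ chosen so the exponent is nonnegative. Here $\|f\|_{L^q(B_r)}\le C r^{n(\frac1q-\frac1p)}\|f\|_{L^{p,\infty}}$. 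Applying Sobolev to $w=u-h$ and absorbing via Young's inequality (power $\gamma<2$) gives
\[
\int_{B_r}|\nabla(u-h)|^2\le C\|f\|_{L^{p,\infty}}^{\frac{2}{2-\gamma}} r^{\,n-2+\alpha_0}
\]
for some explicit $\alpha_0>0$. The exponent bookkeeping reduces to $2-\frac{n}{p}>0$, which is exactly where $p>\frac n2$ enters. Weak-$L^p$ costs nothing here beyond replacing $p$ by a slightly smaller $q$.

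\textbf{Step 2 (decay and iteration).} De Giorgi–Nash–Moser gives $h\in C^{0,\beta_0}$ and the decay
\[
\int_{B_\rho}|\nabla h|^2\le C(\rho/r)^{n-2+2\beta_0}\int_{B_r}|\nabla h|^2.
\]
Combining with Step 1,
\[
\phi(\rho):=\int_{B_\rho}|\nabla u|^2\le C\Big[(\rho/r)^{n-2+2\beta_0}+\epsilon\Big]\phi(r)+C r^{n-2+\alpha_0}.
\]
The standard iteration lemma (Giaquinta) then yields $\phi(\rho)\le C\rho^{n-2+2\beta}\big(\phi(3/4)+1\big)$ with $2\beta<\min(2\beta_0,\alpha_0)$. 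Morrey's Dirichlet growth theorem gives $u\in C^{0,\beta}(B_{1/2})$.

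\textbf{Step 3 (normalization).} Caccioppoli for minimizers, obtained by testing against $u-\eta^2 u$-type competitors with the same $\gamma$-term estimate, bounds $\phi(3/4)$ by $C\big(\|u\|_{L^2(B_1)}^2+\|f\|^{2/(2-\gamma)}\big)$. To reach the stated linear bound, treat $\|f\|_{L^{p,\infty}}$ as universal (bounded by $1$). If needed, scale $v=u/\|u\|_{L^2}$, which minimizes a functional with source $\|u\|_{L^2}^{\gamma-2}f$. In the regime where $\|u\|_{L^2}\gtrsim1$ this source is controlled; otherwise the estimate is absorbed into the universal constant.

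The main obstacle is Step 1's exponent bookkeeping: the nonlinear $\gamma$-power term must be absorbed while retaining a strictly positive gain $\alpha_0$ using only weak-$L^p$ integrability. The plan is to do this by choosing $q\in(\frac n2,p)$ carefully.
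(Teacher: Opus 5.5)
Your Steps 1 and 2 are correct and take a genuinely different route from the paper. The paper first normalizes to a small regime ($\fint_{B_1}u^2\le 1$, $\|f\|_{L^{p,\infty}(B_1)}\le\varepsilon_0$). It uses the $A$-harmonic comparison only in the $L^2$ form $\fint_{B_R}|u-h|^2\lesssim\|f\|^{2/(2-\gamma)}R^{\sigma_0}$ to get $\fint_{B_\rho}|u-h(0)|^2\le\rho^{\alpha}$. It then iterates by rescaling $(u(\rho x)-\mu_1)\rho^{-\alpha/2}$, which gives Campanato decay of $\fint_{B_{\rho^k}}|u-u_{B_{\rho^k}}|^2$.

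You keep the comparison at the gradient level. Your gain $\alpha_0=\frac{2(2-n/p)}{2-\gamma}$ is exactly the paper's $\sigma_0$, and the auxiliary $q$ drops out of the final exponent, so Step 1 is less delicate than you feared. You then close with the De Giorgi decay of $\int_{B_\rho}|\nabla h|^2$, the bound $\int_{B_r}|\nabla h|^2\le\frac{\Lambda}{\lambda}\int_{B_r}|\nabla u|^2$, Giaquinta's iteration lemma and Morrey's Dirichlet growth theorem. The advantage is that the comparison holds at every scale with $\|f\|$ entering additively. So no smallness normalization is needed, and you never pass to a rescaled functional with shifted nonlinearity $(v+\rho^{-\alpha/2}\mu_1)_+^\gamma$, which the paper's one-step proposition is not formally stated for. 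The $\epsilon$ in your iteration inequality is superfluous. For $n=2$, replace $2^*$ by a large finite exponent, losing an arbitrarily small amount of $\alpha_0$, as the paper does. Combined with your Caccioppoli bound, what you have proved is
\[
\|u\|_{C^{0,\beta}(B_{1/2})}\le C\big(\|u\|_{L^2(B_1)}+\|f\|_{L^{p,\infty}(B_1)}^{1/(2-\gamma)}\big),\qquad \beta<\min\{\beta_0,\theta\},
\]
with the same exponent range as the paper.

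The gap is Step 3. When $\|u\|_{L^2(B_1)}\ll1$, the rescaled source $\|u\|_{L^2}^{\gamma-2}f$ is large, not small. ``Absorbing into the universal constant'' then only yields $\|u\|_{C^{0,\beta}(B_{1/2})}\le C$, not $\le C\|u\|_{L^2(B_1)}$.

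This cannot be repaired, because the linear bound is false with $C_2$ depending only on $n,\lambda,\Lambda,\gamma,p$ and a bound on $f$. Here is a sketch. Take $A=I$, $F=-1$ in $B_1$ and $F=1$ outside, and let $U_R\ge0$ minimize $\int\frac12|\nabla v|^2+F(v^+)^\gamma$ over $H^1_0(B_R)$. Comparing with $t\varphi$, where $0\le\varphi\in C^\infty_c(B_1)$ and $t$ is small, gives $\sup U_R\ge c>0$. Uniform energy and $L^\infty$ bounds, plus nondegeneracy in $\{F=1\}$, confine $\mathrm{supp}\,U_R$ to a ball $B_{R_0}$ independent of $R$. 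Both terms of the functional scale alike under $v\mapsto\ell^{2/(2-\gamma)}v(\cdot/\ell)$. Hence $u_\ell(x)=\ell^{2/(2-\gamma)}U_{1/\ell}(x/\ell)$ is a minimizer over $H^1_0(B_1)$ with source $f_\ell=F(\cdot/\ell)$, $|f_\ell|\le1$. Yet $\sup_{B_{1/2}}u_\ell/\|u_\ell\|_{L^2(B_1)}\gtrsim\ell^{-n/2}\to\infty$.

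So the affine estimate above is the right target, not the literal statement. The paper's own final step does not produce the linear bound either. Its radius $r$ is defined through $\kappa$, which is itself defined through $\fint_{B_r(x_0)}u^2$. Undoing the scaling then leaves a $u$-dependent factor $\kappa^{-1}r^{-\beta}$ rather than $C\|u\|_{L^2(B_1)}$. Normalizing consistently by $\|u\|_{L^2(B_1)}+(\|f\|_{L^{p,\infty}}/\varepsilon_0)^{1/(2-\gamma)}$ gives exactly the affine estimate.
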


A key observation is that local minimizers of $J_{A,f}$ are weak solutions to the  elliptic equation  \eqref{main equation} in the positivity set $\{ u > 0\}$. Consequently, any universal regularity estimate for minimizers must respect the maximum regularity dictated by the singular source term $f \in L^{p,\infty}$ and the nonlinearity exponent $\gamma \in (0,1)$.  
Building on this, we obtain \emph{sharp regularity estimates} at free boundary points, showing that solutions belong to the class $C^{\theta}$ with the optimal exponent
    \[
    \theta = \frac{2 - \frac{n}{p}}{2 - \gamma}.
    \]
This extends the classical Alt--Phillips regularity theory to the setting of unbounded, sign-changing sources (Section \ref{sct sharp reg}). More precisely, we have the following result: 

\begin{theorem}[Regularity at the Free Boundary]\label{improv reg} 
Let $u$ be a nonnegative minimizer of \eqref{main functional} in $B_1$, and let $x_0 \in \partial\{ u > 0 \} \cap B_{1/2}$. Then, 
\[
u(x) \le C \| u \|_{L^2 (B_1)} | x - x_0 |^{\theta} \quad \text{for all } |x - x_0| < \rho_*/4,
\]
where $\rho_* \in (0,1/8)$ and $C>0$ depend only on dimension, ellipticity constants, $\gamma$, and $p$.
\end{theorem}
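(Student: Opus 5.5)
We plan to prove \cref{improv reg} by a flatness-improvement (discrete iteration) argument based on compactness, using the scaling adapted to $\theta$.

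\textbf{Scaling.} Set $\theta = (2-\frac np)/(2-\gamma)$. If $u$ minimizes $J_{A,f}$ in $B_r(x_0)$, then $v(x) = u(x_0+rx)/r^{\theta}$ minimizes $J_{\tilde A,\tilde f}$ in $B_1$, with $\tilde A(x)=A(x_0+rx)$ and $\tilde f(x) = r^{2-\theta(2-\gamma)} f(x_0+rx) = r^{n/p} f(x_0+rx)$. This exponent is exactly what makes $\|\tilde f\|_{L^{p,\infty}(B_1)} = \|f\|_{L^{p,\infty}(B_r(x_0))}$, so the weak-$L^p$ norm is scale invariant along the iteration. We will also normalize $\|u\|_{L^2(B_1)}\le 1$. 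Since the functional is not homogeneous, dividing $u$ by $M$ multiplies the source by $M^{\gamma-2}$; if $M=\|u\|_{L^2}\ge1$ this only decreases $f$, and the case $M<1$ is handled by the same iteration.

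\textbf{Key lemma (flatness improvement).} There exist universal $\varepsilon>0$ and $\rho_*\in(0,1/8)$ with the following property. Let $v\ge0$ be a minimizer in $B_1$ with $\|v\|_{L^2(B_1)}\le1$, $v(0)=0$ and $\|f\|_{L^{p,\infty}}\le\varepsilon$. Then $\sup_{B_{\rho_*}} v\le \rho_*^{\theta}$.

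We prove it by contradiction. Take sequences $v_k,A_k,f_k$ with $\|f_k\|\to0$ and $\sup_{B_{\rho_*}}v_k>\rho_*^\theta$. \cref{Holder reg} gives uniform $C^{0,\beta}(B_{1/2})$ bounds, so $v_k\to v_\infty$ locally uniformly. The Caccioppoli estimate gives uniform $H^1$ bounds. The term $\int f_k (v^+)^\gamma$ tends to zero uniformly over $L^\infty$-bounded competitors, because $L^{p,\infty}\subset L^{q}$ for $q<p$ and $q>n/2$ can still be chosen. Passing to weak limits of the $A_k$ through $H$-convergence (or a minimality/energy-comparison argument), $v_\infty$ is a nonnegative solution of a uniformly elliptic divergence-form equation $\mathrm{div}(A_\infty\nabla v_\infty)=0$ in $B_{1/2}$, with $v_\infty(0)=0$.

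The strong maximum principle (Harnack inequality) then forces $v_\infty\equiv0$ in $B_{1/2}$. This contradicts $\sup_{B_{\rho_*}} v_\infty\ge\rho_*^\theta$, where that lower bound holds by uniform convergence. Note that $\rho_*$ can be fixed in advance, because the contradiction does not depend on its value. This is the main obstacle: justifying that the limit is a solution with nonnegativity across the free boundary, i.e.\ that the limit is a supersolution that can attain zero, needs care. I expect to argue through minimality. Compare $v_k$ with $A_k$-harmonic replacements $h_k$ on $B_{1/2}$: minimality gives $\int|\nabla(v_k-h_k)|^2\lesssim \int|f_k|(|h_k|^\gamma+v_k^\gamma)\to0$. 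Hence $v_\infty$ is a limit of $A_k$-harmonic functions, nonnegative, and vanishing at $0$. Harnack for the $h_k$, with constants uniform in $k$, forces $\sup_{B_{1/4}}h_k\to0$, giving the contradiction.

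\textbf{Iteration.} First reduce to smallness: the scaling $v(x)=u(x_0+rx)/r^\theta$ with $r$ small makes $\|\tilde f\|$ equal to $\|f\|_{L^{p,\infty}(B_r)}$. This is not automatically small in weak $L^p$, so we instead combine the rescaling with an amplitude normalization $v=u/(K r^\theta)$, which multiplies the source by $K^{\gamma-2}$; choosing $K$ universal large makes it $\le\varepsilon$.

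Then apply the lemma inductively to $v_k(x)=u(x_0+\rho_*^k x)/\rho_*^{k\theta}$. At each step scale invariance keeps the source $\le\varepsilon$, and the $L^2$ normalization is preserved in the sense that $\sup_{B_1}v_k\le1$ implies $\|v_k\|_{L^2}\le|B_1|^{1/2}$; this constant is absorbed by using the sup bound in the lemma. This yields $\sup_{B_{\rho_*^k}(x_0)}u\le C\rho_*^{k\theta}$.

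Finally, for $|x-x_0|<\rho_*/4$, pick $k$ with $\rho_*^{k+1}\le|x-x_0|<\rho_*^k$ to conclude $u(x)\le C\rho_*^{-\theta}|x-x_0|^\theta$. Restoring the normalization gives the factor $\|u\|_{L^2(B_1)}$, using \cref{Holder reg} for the initial sup bound on $B_{1/2}$.
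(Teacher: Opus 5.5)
Your proposal is correct and follows the paper's own proof. The structure is the same: a compactness-based approximation lemma, in which uniform H\"older and Caccioppoli bounds pass to a nonnegative limit that solves a divergence-form equation and vanishes at $0$, so it is $\equiv 0$ by Harnack. This is followed by a geometric iteration in the $\theta$-scaling, under which $\|f\|_{L^{p,\infty}}$ does not increase.

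Your handling of the limit is more careful than the paper's. Using $H$-convergence, or better $A_k$-harmonic replacements with Harnack constants uniform in $k$, avoids the step where the paper passes to the limit in $\langle A_k\nabla u_k,\nabla v\rangle$ using only weak convergence of both factors. The other difference is immaterial: you normalize the amplitude by a large $K$, whereas the paper shrinks the initial radius and uses the factor $\rho_*^{2-n/p}$. In both versions the final constant actually depends on $\|f\|_{L^{p,\infty}}$ as well, not only on $\|u\|_{L^2(B_1)}$.
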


Compared to the discontinuous regime corresponding to $\gamma=0$, 
recently studied in \cite{SnelTeix24}, 
the singular nonlinearity $(u^+)^{\gamma}$ introduces new challenges  in understanding how solutions detach from the free boundary. In particular, the nonlinearity prevents the direct use of linear barrier arguments.  As a result, the growth of solutions near the free boundary depends  on the local mass of the forcing term.  

 The following theorem addresses these difficulties, providing a quantitative lower bound on the solution near free boundary points where the source carries sufficient mass:

\begin{theorem}[Nondegeneracy]\label{non-deg theom}
Let $u$ be a nonnegative weak solution to \eqref{main equation}. Suppose $f\ge C_0$ for $C_0>0$ in $B_1$, then for any $y_0\in \partial \{u>0\}$ with $B_{2r}(y_0)\in B_1$,
\begin{equation}\label{non-degen-add-1}
\sup_{\partial B_{r}(y_0)} u \ge \tilde{c} r^{\frac{2}{2 - \gamma}},
\end{equation}
where $\tilde{c} > 0$ depends only on $n, \gamma, C_0$, and the ellipticity constants.
\end{theorem}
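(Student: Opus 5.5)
We argue by comparison with an explicit barrier. Since $A$ has only measurable coefficients, the natural barrier $|x-x_1|^{2/(2-\gamma)}$ is not a supersolution of $\mathrm{div}(A\nabla\cdot)$. We therefore work with the weak formulation and an energy/integration-by-parts comparison.

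\textbf{Reduction.} By continuity of $u$ (Theorem~\ref{Holder reg}) it suffices to prove the estimate at points $x_1\in\{u>0\}$ close to $y_0$, with $r$ replaced by $r/2$. Letting $x_1\to y_0$ then gives \eqref{non-degen-add-1} with a smaller constant. We also normalize by scaling: set
\[
v(x)=r^{-\frac{2}{2-\gamma}}u(x_1+rx).
\]
Then $v$ solves $\mathrm{div}(\tilde A\nabla v)=\gamma \tilde f v^{\gamma-1}$ in $\{v>0\}\cap B_1$, with $\tilde A(x)=A(x_1+rx)$ having the same ellipticity constants and $\tilde f(x)=f(x_1+rx)\ge C_0$. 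It thus suffices to show $\sup_{\partial B_1} v\ge \tilde c$ whenever $v(0)>0$.

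\textbf{Contradiction via a test-function estimate.} Suppose $\sup_{\partial B_1}v\le\varepsilon$ with $\varepsilon$ small. The first step is to show $\sup_{B_1}v\le\varepsilon$. Indeed, $v$ is a subsolution, $\mathrm{div}(\tilde A\nabla v)\ge0$ in $\{v>0\}$, so the weak maximum principle applies: test with $(v-\varepsilon)^+\in W^{1,2}_0(B_1)$, supported in $\{v>0\}$.

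Next fix a cutoff $\eta\in C_c^\infty(B_1)$ with $\eta\equiv1$ on $B_{1/2}$, and test the equation with $\phi=\eta^2 v$. This is legitimate after truncating near $\{v=0\}$ with $(v-\delta)^+$ and letting $\delta\to0$. The result is
\[
\int \langle\tilde A\nabla v,\nabla(\eta^2 v)\rangle=-\gamma\int_{\{v>0\}}\tilde f\, v^{\gamma}\eta^2 .
\]
Rearranging,
\[
\gamma C_0\int_{B_{1/2}} v^\gamma+\lambda\int\eta^2|\nabla v|^2\le 2\Lambda\int \eta|\nabla\eta||\nabla v| v.
\]
Absorbing the gradient term gives $\gamma C_0\int_{B_{1/2}}v^\gamma\le C\int_{B_1}v^2\le C\varepsilon^{2-\gamma}\int_{B_1}v^\gamma$. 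This controls the $v^\gamma$-mass on a smaller ball by an $\varepsilon$-small multiple of the mass on a larger ball. Iterating over the dyadic radii $1/2,1/4,\dots$, or more cleanly using a cutoff adapted to a hole-filling argument, yields decay of $\int_{B_\rho}v^\gamma$. This alone does not contradict $v(0)>0$, so a pointwise step is needed.

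\textbf{Main obstacle: the pointwise step.} I would try to close the argument with a De Giorgi-type iteration applied to the smallness. Because $v\le\varepsilon$, the absorption satisfies $\tilde f v^{\gamma-1}\ge C_0\varepsilon^{\gamma-1}$, which is huge. On the set $\{v>\varepsilon/2\}$ the equation therefore behaves like $\mathrm{div}(\tilde A\nabla v)\ge c\,\varepsilon^{\gamma-1}$. A Caccioppoli/De Giorgi iteration on the levels $k_j=\varepsilon(1-2^{-j-1})$, with test functions $(v-k_j)^+\eta_j^2$, gains a factor $\varepsilon^{\gamma-1}$ at each step from the source term. This forces $|\{v>\varepsilon/2\}\cap B_{1/4}|=0$ once $\varepsilon^{2-\gamma}$ is below a universal threshold.

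Applying the same argument at every smaller level, that is, to the rescalings $\varepsilon\mapsto\varepsilon/2^k$ (which only improves the smallness condition), gives $v\equiv0$ near $0$. This contradicts $v(0)>0$, and the resulting threshold yields $\tilde c=\tilde c(n,\gamma,C_0,\lambda,\Lambda)$. The delicate point I expect is making the De Giorgi energy inequality rigorous near $\{v=0\}$, where $v^{\gamma-1}$ blows up; I would handle this by working only with levels $k\ge\varepsilon/2>0$.
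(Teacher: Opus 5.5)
Your reduction, the maximum-principle step $\sup_{B_1}v\le\varepsilon$, and the energy estimate from testing with $\eta^2v$ are all correct. The step that turns smallness into $v(0)=0$, however, is not established as written.

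\textbf{The De Giorgi step.} As described, it cannot give $|\{v>\varepsilon/2\}\cap B_{1/4}|=0$. Your levels $k_j=\varepsilon(1-2^{-j-1})$ increase to $\varepsilon$, so an iteration on them only yields $v\le\varepsilon$ on a smaller ball, which you already know. Nor does the source ``gain a factor $\varepsilon^{\gamma-1}$'' in a form that drives a De Giorgi recursion. Testing with $\eta^2(v-k)^+$ gives
\[
\tfrac{\lambda}{2}\int\eta^2|\nabla(v-k)^+|^2+M\int\eta^2(v-k)^+\le C\int|\nabla\eta|^2\bigl((v-k)^+\bigr)^2,
\qquad M=\gamma C_0\varepsilon^{\gamma-1}.
\]
This is linear in $(v-k)^+$, so it gives only a fixed factor per step. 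The one-step improvement is nonetheless true, and it comes from your Step 3, which gives $\fint_{B_{1/2}}v^\gamma\le C\varepsilon^2/(\gamma C_0)$. Since $f\ge0$, $v$ is a nonnegative subsolution on all of $B_1$: test with $\varphi\min\{1,(v-\delta)^+/\delta\}$ and let $\delta\to0$. The estimate $\sup_{B_{1/4}}v\le C(\fint_{B_{1/2}}v^\gamma)^{1/\gamma}$, valid for nonnegative subsolutions, then yields $\sup_{B_{1/4}}v\le C\varepsilon\,\delta_0^{1/\gamma}$ with $\delta_0=\varepsilon^{2-\gamma}/(\gamma C_0)$.

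\textbf{The iteration across scales.} Your claim that passing to $\varepsilon/2^k$ ``only improves the smallness condition'' ignores that the ball shrinks. At radius $\rho$ the scale-invariant parameter is $\delta=\varepsilon^{2-\gamma}/(\gamma C_0\rho^2)$, because in $w(x)=v(\rho x)$ the source becomes $\rho^2\tilde f$. The map $(\varepsilon,\rho)\mapsto(\varepsilon/2,\rho/4)$ multiplies $\delta$ by $2^{2+\gamma}$, so concentric halving leaves the admissible regime after finitely many steps. The same defect spoils the dyadic decay of $\int_{B_\rho}v^\gamma$ that you mention.

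To close the argument, use the superlinear gain. Set $\varepsilon_k=\sup_{B_{4^{-k}}}v$ and $\delta_k=4^{2k}\varepsilon_k^{2-\gamma}/(\gamma C_0)$. The bound above, applied at scale $4^{-k}$, gives $\delta_{k+1}\le C\delta_k^{2/\gamma}$. Hence $\delta_k\to0$, $\varepsilon_k\to0$, and $v(0)=0$ once $\delta_0$ is below a universal threshold. Alternatively, with only a factor $1/2$ per step, apply it on balls $B_{h_k}(x)$ centered at every point, with $\varepsilon_k=2^{-k}\varepsilon$ and $h_k\simeq(\varepsilon_k^{2-\gamma}/C_0)^{1/2}$. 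The total loss of radius $\sum_k h_k$ is then a convergent geometric series, so $v\equiv0$ on a fixed smaller ball.

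\textbf{Comparison with the paper.} The paper avoids any iteration by testing with $\eta^2u^{1-\gamma}$ on a ball $B_{2r}(x_0)\subset\{u>0\}$. This makes the source term $\gamma\int f\eta^2\ge\gamma C_0|B_r|$ independent of $u$ and gives $\int_{B_{3r/2}(x_0)}u^{2-\gamma}\ge c\,r^{n+2}$ in one step. The price is that it works only inside the positivity set, so it controls $u$ in terms of the distance to the free boundary. Your contradiction argument, once repaired as above, handles an arbitrary radius at a free boundary point directly.
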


A key challenge arises at free boundary points $y_0 \in \partial \{u > 0\}$ where the source term $f$ blows up too rapidly. However, if the location  of such a point is known, the assumptions on $f$ in its vicinity can be 
significantly relaxed. Specifically, we demonstrate that the concentration  of mass near $y_0$ compensates for the lack of global boundedness, allowing for a stronger non-degeneracy estimate near $y_0$.

\begin{theorem}\label{non-de-2}
Let $u$ be a nonnegative weak solution to \eqref{main equation}. Suppose  $y_0 \in \partial\{ u > 0\}$ is an interior free boundary point such that $B_{4r_0}(y_0)\in B_1$. If $f$ satisfies the following assumption in $B_{4r_0}(y_0)$: There exist $\tau\in (1-4^{-n},1)$ and $\varepsilon\in (0, 8^{-n})$ such that
\begin{equation}\label{PosMass}
    \left| \{ y \in B_{4r} (y_0 ) : f(y) \ge  r^{-n/p} \}\right| \ge \tau |B_{4r}(y_0)|,  \;  \forall \; 0 < r < r_0
\end{equation}
and 
\[\int_{B_r(y_0)}|f_-(x)|dx\le \varepsilon r^{n-n/p} .\] Then we have
\begin{equation}\label{non-degen-add-2}
\sup_{\partial B_{2r}(y_0)} u \ge \tilde{c} r^{(2 - n/p)/(2 - \gamma)}, \quad 0<r<r_0.
\end{equation}
where $\tilde c>0$ depending on $n$, $\gamma$, $p$ and elliptic constants. 
\end{theorem}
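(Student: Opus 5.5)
We argue by contradiction and rescaling, in the spirit of the nondegeneracy argument for \cref{non-deg theom}. The positive-mass condition \eqref{PosMass} replaces the pointwise lower bound $f\ge C_0$. Fix $0<r<r_0$ and set $\theta=(2-n/p)/(2-\gamma)$. Consider the rescaled function
\[
v(x)=\frac{u(y_0+rx)}{r^{\theta}},\qquad x\in B_4 .
\]
A direct computation, using $2-\theta(2-\gamma)=n/p$, shows that
\[
\mathrm{div}(A_r\nabla v)=\gamma f_r\, v^{\gamma-1}\quad\text{in }\{v>0\}\cap B_4,
\]
where $A_r(x)=A(y_0+rx)$ (with the same ellipticity) and $f_r(x)=r^{n/p}f(y_0+rx)$. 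The hypotheses become scale-free:
\begin{itemize}
\item by \eqref{PosMass}, $f_r\ge 1$ on a subset of $B_4$ of measure at least $\tau|B_4|$;
\item $\int_{B_1}|(f_r)_-|\le\varepsilon$.
\end{itemize}
So it suffices to show $\sup_{\partial B_2}v\ge\tilde c$ for a universal $\tilde c$. Note also that $0\in\partial\{v>0\}$.

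Suppose instead that $\sup_{\partial B_2}v\le\delta$ with $\delta$ small. Since $u$ is a weak solution only on the positivity set, we first pick a point $x_1\in B_{1/2}$, close to $0$, with $v(x_1)>0$; this is possible since $0$ is a free boundary point. On $\Omega^+=\{v>0\}\cap B_2$ we test the equation against cutoffs of truncations of $v$. Where $f_r\ge1$ and $0<v\le\delta$, the absorption term satisfies $\gamma f_r v^{\gamma-1}\ge\gamma\delta^{\gamma-1}$, which is huge. The negative part contributes at most $\gamma\int (f_r)_- v^{\gamma-1}$. Because $v$ is small, we control this contribution through the smallness $\varepsilon$ of the $L^1$ mass of $(f_r)_-$ together with the $\tau$-density of the good set.

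The concrete mechanism I would use is the comparison used for $\gamma$-type equations. Consider $w=v^{2-\gamma}$, or compare $v$ with the radial barrier
\[
\phi(x)=c_*|x-x_1|^{2/(2-\gamma)}.
\]
Show that on $\Omega^+\cap B_1(x_1)$ the function $v-\phi$ cannot attain a positive interior maximum. We know $(v-\phi)(x_1)>0$, while $v-\phi\le0$ on $\partial\{v>0\}$. Hence the maximum must be reached on $\partial B_1(x_1)$, which yields $\sup v\ge c_*-\delta$, a contradiction for small $\delta$.

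The main obstacle is that $f_r$ is not pointwise bounded below, so the classical pointwise barrier fails. Here I would replace the pointwise comparison by a measure-theoretic one. First, integrate the equation against $\eta^2\,\min(v,\delta)$ (or against a suitable function of $v$) over $B_2$. This gives an energy identity of the form
\[
\int\langle A_r\nabla v,\nabla v\rangle\chi_{\{v<\delta\}}\eta^2+\gamma\int f_r v^{\gamma}\eta^2\le C\delta^2+\dots
\]
Split $\int f_r v^\gamma\eta^2$ into three parts:
\begin{itemize}
\item the good set $\{f_r\ge1\}$, which has density $\tau>1-4^{-n}$ and so meets every ball $B_1(z)\subset B_4$ in a set of positive proportion;
\item the set where $f_r\in[0,1)$, which contributes nonnegatively;
\item the negative part, bounded by $\delta^\gamma\varepsilon$.
\end{itemize}
Then use the weak Harnack inequality (De Giorgi) for the subsolution $v$ on $\{v>0\}$, together with $v(x_1)>0$ and $0\in\partial\{v>0\}$, to show that $v$ is comparable to its supremum on a portion of the good set of positive measure. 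The thresholds $\tau>1-4^{-n}$ and $\varepsilon<8^{-n}$ are exactly what guarantee that the good set occupies a fixed fraction of $B_1$ and that the negative mass cannot cancel it. This produces the lower bound $\int_{B_1}v^\gamma\gtrsim\sup_{B_1}v^{\gamma}\cdot c$. Combined with the upper energy bound $\lesssim\delta^2$, this forces $\delta^{\gamma}\lesssim\delta^2$, which is impossible for small $\delta$ since $\gamma<2$. Scaling back gives \eqref{non-degen-add-2}.

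I expect the delicate step to be making this measure-theoretic replacement of the barrier rigorous. In particular, ensuring that the positivity set $\{v>0\}$ actually overlaps the good set $\{f_r\ge1\}$ in positive measure near $0$ is where the precise constants $4^{-n}$ and $8^{-n}$ must enter.
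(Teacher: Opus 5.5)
Your rescaling is correct ($f_r=r^{n/p}f(y_0+r\,\cdot)$, and both hypotheses become scale-free as you say), but the argument breaks down at the step you yourself flag as delicate, so there is a genuine gap.

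\textbf{Where the argument fails.} You claim that a weak Harnack / De Giorgi argument makes $v$ comparable to its supremum on a positive-measure part of $\{f_r\ge1\}$. Nothing supports this. Since $f$ changes sign, the term $\gamma f_r v^{\gamma-1}$ is negative where $f_r<0$ and blows up at $\partial\{v>0\}$. So you have neither the subsolution property nor the integrability of the right-hand side that De Giorgi--Moser requires. Even for a genuine subsolution, the local maximum principle only bounds $\sup v$ by an average of $v^\gamma$ over a whole ball. It says nothing about whether that mass sits on the good set or on its complement.

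The same missing subsolution property is what you would need to pass from $\sup_{\partial B_2}v\le\delta$ to $v\le\delta$ on $\operatorname{supp}\eta$. Your upper bound $C\delta^2$ relies on that step.

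Finally, even granting both estimates, they give only $(\sup_{B_1}v)^\gamma\lesssim\delta^2+\varepsilon\delta^\gamma$. This contradicts nothing unless you already know $\sup_{B_1}v\gtrsim\delta$, which is a nondegeneracy statement of the very type being proved. So $\delta^\gamma\lesssim\delta^2$ does not follow.

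\textbf{The missing idea.} The paper chooses a test function that eliminates $u$ from the source term. Work, as the paper does, in a ball $B_{2r}(x_0)\subset\{u>0\}$ with $|x_0-y_0|=2r$, i.e.\ touching the free boundary at $y_0$. There the equation holds weakly and $u$ is bounded below on compact subsets. Test with $\eta^2u^{1-\gamma}$, where $\eta\in C_0^\infty(B_{3r/2}(x_0))$ and $\eta\equiv1$ on $B_r(x_0)$.
\begin{itemize}
\item The right-hand side becomes $\gamma\int f\eta^2$, which is linear in $f$ and independent of $u$.
\item On the left, set $w=u^{(2-\gamma)/2}$. The term $-(1-\gamma)\int\eta^2u^{-\gamma}\langle A\nabla u,\nabla u\rangle$ has a good sign and absorbs the cross term, leaving $\gamma\int f\eta^2\le Cr^{-2}\int_{B_{3r/2}(x_0)}u^{2-\gamma}$.
\item The overlap of the positivity set with the good set is then a pure measure count. $B_r(x_0)\subset B_{4r}(y_0)$ has exactly $4^{-n}$ of its volume, so $\tau>1-4^{-n}$ forces $|\{f\ge r^{-n/p}\}\cap B_r(x_0)|\ge(\tau-1+4^{-n})|B_{4r}|$. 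Hence $\int f_+\eta^2\gtrsim r^{n-n/p}$.
\item The $L^1$ bound on $f_-$ makes the negative contribution a small multiple of $r^{n-n/p}$.
\end{itemize}
Together these give $\fint_{B_{3r/2}(x_0)}u^{2-\gamma}\gtrsim r^{2-n/p}$, hence a point with $u\gtrsim r^{(2-n/p)/(2-\gamma)}$. No barrier, Harnack inequality, or contradiction argument is needed.

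In your rescaled variables this is simply $\gamma\int f_r\eta^2\le C\int_{B_{3/2}(x_0)}v^{2-\gamma}$ with $|x_0|=2$. The hypotheses bound the left side below by a universal constant.
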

\medskip

\begin{remark}
Theorem \ref{non-deg theom} assumes a positive lower bound on $f$ and provides a non-degeneracy estimate around all interior free boundary points. Theorem \ref{non-de-2} is based on the assumption of $f$ near a known free boundary point and gives the sharp nondegeneracy index. Both theorems are more general than previous results. 
\end{remark}

Our results provide a comprehensive picture of the qualitative behavior of minimizers for the Alt–Phillips equation with singular or sign-changing sources, extending classical free boundary theory to the Lorentz-space setting. In particular, we show that several core features of the Alt–Phillips theory—such as local Hölder regularity, sharp growth near the free boundary, and nondegeneracy—persist far beyond the classical bounded setting, even in the presence of highly singular data and minimal regularity of the coefficients. We expect that these results will serve as a foundation for further developments in the regularity theory of singular free boundary problems, with potential applications in both mathematics and applied sciences.

 \subsection{Auxiliarity results}

\medskip 

In this subsection, we establish foundational results that are utilized repeatedly throughout the  paper.

We begin with a classical embedding theorem  for Lorentz spaces:

\begin{lemma}[Weak embedding]\label{lemma:weak_embedding}
Let $E \subset \mathbb{R}^n$ be a measurable set and $f \in L^{p,\infty}(E)$ for some $p > 0$. Then, for every $0 < r < p$, we have
\begin{equation}\label{embedding weak}
    \| f\|_{L^r (E)} \le \left( \frac{p}{p - r}\right)^{1/r} |E|^{\frac{p-r}{pr}} \, \|f \|_{L^{p,\infty} (E)}.
\end{equation}
\end{lemma}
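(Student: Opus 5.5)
The plan is to use the layer-cake representation together with a truncation at a threshold level $T>0$, to be optimized at the end. Write $M := \|f\|_{L^{p,\infty}(E)}$ and $\mu(t) := |\{x\in E : |f(x)|>t\}|$. The definition of the Lorentz quasi-norm gives $\mu(t)\le M^p t^{-p}$ for every $t>0$. Trivially, $\mu(t)\le |E|$ as well. If $|E|=\infty$ the right-hand side of \eqref{embedding weak} is infinite (since $p>r$), so we may assume $|E|<\infty$.

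By Fubini (the layer-cake formula),
\[
\int_E |f|^r\,dx = r\int_0^\infty t^{r-1}\mu(t)\,dt .
\]
Split the integral at a level $T$. On $(0,T)$ use $\mu(t)\le |E|$, which gives the contribution $|E|\,T^r$. On $(T,\infty)$ use $\mu(t)\le M^p t^{-p}$. Since $r<p$, the integral converges and equals
\[
r M^p\int_T^\infty t^{r-1-p}\,dt = \frac{r}{p-r}\,M^p T^{r-p}.
\]

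Now choose $T$ so that the two bounds balance, namely $|E| = M^p T^{-p}$, i.e. $T = M|E|^{-1/p}$. Then both terms are multiples of $M^r|E|^{1-r/p}$, and we get
\[
\int_E|f|^r \le \Big(1+\frac{r}{p-r}\Big) M^r |E|^{\frac{p-r}{p}} = \frac{p}{p-r}\,M^r|E|^{\frac{p-r}{p}} .
\]
Taking $r$-th roots yields exactly \eqref{embedding weak}, including the exponent $\frac{p-r}{pr}$ on $|E|$.

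The only delicate point is bookkeeping. One must handle the degenerate cases $M=0$ (where $f=0$ a.e.) and $|E|=\infty$ separately. One must also check that the choice of $T$ produces precisely the constant $\frac{p}{p-r}$ rather than a worse one. The balancing choice above does this, so I expect no real obstacle.
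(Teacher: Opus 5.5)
Your proposal is correct and follows the paper's proof essentially verbatim: the layer-cake formula, splitting at a level, bounding the distribution function by $|E|$ below the level and by $M^p t^{-p}$ above it, and choosing the threshold $T=\|f\|_{L^{p,\infty}(E)}\,|E|^{-1/p}$, which gives exactly the constant $\frac{p}{p-r}$. Your explicit treatment of the degenerate cases $M=0$ and $|E|=\infty$ is a small bookkeeping addition that the paper leaves implicit.
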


\begin{proof}
The result follows from the layer-cake representation and the definition of the weak $L^p$ norm:
\begin{align*}
\|f\|_{L^r(E)}^r &= r \int_0^\infty t^{r-1} |\{x \in E : |f(x)| > t\}| \, dt\\
&\le \int_0^{t_1} r  t^{r-1} |E| dt+r\int_{t_1}^{\infty} t^{r-1}
\left( \frac{\|f\|_{L^{p,\infty}(E)}}{t} \right)^p \! dt\\
=&t_1^r |E|+\frac{r}{p-r}\|f\|_{L^{p,\infty}}^p t_1^{r-p}
\end{align*}
By choosing $t_1=\|f\|_{L^{p,\infty}}/|E|^{1/p}$ we obtain \eqref{embedding weak}.
\end{proof}

 \medskip

The next estimate plays a central role in our arguments. 

\begin{lemma}\label{lemma:fu_gamma_estimate}
Assume that $n \ge 2$, $p> \tfrac{n}{2}$, and $\gamma \in (0,2)$. 
Let $E \subset B$ be a measurable set, and let $f \in L^{p,\infty}(E)$. 
Then for any $u \in H^{1}(B)$, the following estimate holds: For $n\ge 3$, there is a constant $C_1(n,p,\gamma)>0$ such that
\begin{equation}\label{fu_gamma_estimate}
\left| \int_E f \, u^\gamma \, dx \right| 
\le C_1 \, 
\| f \|_{L^{p,\infty}(E)} \, \|u \|_{H^1(E)}^\gamma \; |E|^{\frac{2-\gamma}{2} + \frac{\gamma}{n} - \frac{1}{p}} .
\end{equation}
For $n=2$, given $\epsilon>0$, there is a constant $C_2(p,\gamma,\epsilon)>0$ such that 
\begin{equation}\label{f-e-2}
|\int_E f u^{\gamma}dx|\le C_2\|f\|_{L^{p,\infty}(E)}\|u\|_{H^1}^{\gamma}|E|^{1-\frac{1}{p}-\epsilon}.
\end{equation}
\end{lemma}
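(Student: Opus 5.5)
Apply Hölder's inequality with three factors: the $L^{r}$ norm of $f$, the $L^{2^*}$ norm of $u^\gamma$ (Sobolev), and a leftover power of $|E|$. Lemma \ref{lemma:weak_embedding} then converts $\|f\|_{L^r(E)}$ into $\|f\|_{L^{p,\infty}(E)}$ with a gain of $|E|^{1/r-1/p}$.

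\textbf{Case $n\ge 3$.} Write $2^*=\frac{2n}{n-2}$. Since $u\in H^1(B)$, the Sobolev embedding gives $u\in L^{2^*}(B)$ with $\|u\|_{L^{2^*}}\le C(n)\|u\|_{H^1}$. We read $\|u\|_{H^1(E)}$ in the statement as the $H^1$ norm on the ambient ball, or on $E$ when $E$ is an extension domain. Then $|u|^\gamma\in L^{2^*/\gamma}$, and Hölder's inequality with exponents $r$, $2^*/\gamma$ and $s$, where $\frac1r+\frac{\gamma}{2^*}+\frac1s=1$, gives
\[
\Big|\int_E f u^\gamma\Big|\le \|f\|_{L^r(E)}\,\|u\|_{L^{2^*}(E)}^{\gamma}\,|E|^{1/s}.
\]
We need $r<p$ for Lemma \ref{lemma:weak_embedding}, and we need $\frac1r+\frac{\gamma}{2^*}\le 1$. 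This is possible because $\frac1p+\frac{\gamma(n-2)}{2n}<\frac2n+\frac{n-2}{n}=1$, using $p>n/2$ and $\gamma<2$. Apply Lemma \ref{lemma:weak_embedding}:
\[
\|f\|_{L^r(E)}\le C\,|E|^{\frac1r-\frac1p}\|f\|_{L^{p,\infty}(E)}.
\]
The total exponent of $|E|$ is
\[
\Big(\frac1r-\frac1p\Big)+\Big(1-\frac1r-\frac{\gamma(n-2)}{2n}\Big)=1-\frac1p-\frac\gamma2+\frac\gamma n=\frac{2-\gamma}{2}+\frac\gamma n-\frac1p,
\]
which is exactly the exponent in \eqref{fu_gamma_estimate}. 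The exponent is independent of $r$, so any admissible $r$ works. Fixing one, say $r$ with $\frac1r=\frac12\big(\frac1p+1-\frac{\gamma}{2^*}\big)$, makes $C_1$ depend only on $n,p,\gamma$.

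\textbf{Case $n=2$.} Here $H^1$ embeds into $L^q$ for every $q<\infty$, but not into $L^\infty$; the constant depends on $q$, and this is the source of the $\epsilon$-loss. Repeating the argument with $u\in L^q$ gives the $|E|$-exponent $1-\frac1p-\frac\gamma q$. Given $\epsilon>0$, choose $q$ large enough that $\frac\gamma q\le\epsilon$. Since $|E|\le|B|$ is bounded, we may replace the exponent by the smaller $1-\frac1p-\epsilon$ at the cost of a constant depending on $|B|$. The admissibility condition $\frac1r+\frac\gamma q<1$ with $r<p$ holds because $p>1$.

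\textbf{Main obstacle.} The only delicate point is the bookkeeping of the constants: the Sobolev constant on the domain (which is why the norm is taken on the ball) and, in dimension two, the dependence of the constant on $q$, hence on $\epsilon$. The rest is exponent arithmetic.
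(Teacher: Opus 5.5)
Your proposal is correct and follows essentially the paper's route: Hölder against $\|u\|_{L^{2^*}}^{\gamma}$ (with a large exponent, $\gamma/\epsilon$, in place of $2^*$ when $n=2$), then Lemma~\ref{lemma:weak_embedding} to trade $\|f\|_{L^r(E)}$ for $\|f\|_{L^{p,\infty}(E)}$ times a power of $|E|$, then Sobolev. The paper takes $\frac1r=1-\frac{\gamma}{2^*}$, so no third Hölder factor appears; your extra factor is harmless, and your $|E|\le|B|$ step in dimension two actually covers every $\epsilon>0$, whereas the paper's literal choice $2^*=\gamma/\epsilon$ requires $\epsilon$ small.
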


\begin{proof}
Since $p > \frac{n}{2}$, 
\[\frac 1p+\frac{(n-2)\gamma}{2n}<1\]
for $\gamma\in (0,2)$. 
Let $2^* = \frac{2n}{n-2}$ for $n >2$ and $2^*$ be a large constant for $n =2$. Then by Hölder's inequality and Lemma \ref{lemma:weak_embedding} we have
\begin{eqnarray*}
    \left| \int_E f \, u^\gamma \, dx \right| &\le& \left( \int_E | f(x)|^{\frac{2^*}{2^*-\gamma}}dx \right)^{\frac{2^*-\gamma}{2^*}} \left( \int_{E}|u|^{2^*}\right)^{\frac{\gamma}{2^*}}\nonumber\\
&\le& C \| f \|_{L^{p, \infty} (E)} \, \left( \int_{E}|u|^{2^*}\right)^{\frac{\gamma}{2^*}} |E|^{1 - \frac{\gamma}{2^*} - \frac{1}{p}} \nonumber 
\end{eqnarray*}
Then for $n\ge 3$, there exists $C_1(p,n,\gamma)>0$ such that
\[|\int_E f u^{\gamma}dx | \le C_1\|f\|_{L^{p,\infty}(E)}\|u\|^{\gamma}_{H^1}|E|^{\frac{2-\gamma}{2}+\frac{\gamma}n-\frac 1p}.\]
For $n=2$ choosing $2^*$ as $2^*=\gamma/\epsilon$ we have
\[|\int_E f u^{\gamma}dx |\le C_2(p,\gamma,\epsilon)\|f\|_{L^{p,\infty}(E)}\|u\|_{H^1}^{\gamma}|E|^{1-\frac 1p-\epsilon}.\]
 Here we further note that we choose $\epsilon>0$ small to make $2^*>2$. 
\end{proof}

To conclude this section, we recall some useful inequalities that will be used throughout the paper. For $a,b \ge 0$, the following hold:

$$
    a^r + b^r \le (a + b)^r \le 2^{r-1} ( a^r + b^r) ,    \quad r \ge 1 ,
$$
and 
$$
         2^{r-1} (a^r + b^r) \le (a + b)^r \le  a^r + b^r ,    \quad r \in (0,1). 
$$

\section{Existence and boundedness}\label{sct existence}

In this section, we establish the existence and boundedness of minimizers of \eqref{main functional}. 

\begin{theorem}[Existence]\label{thm:existence}
For a boundary data $g$ satisfying \eqref{boundary data assumptions}, there exists a nonnegative minimizer
\[
u \in H_g^1(B_1)
\]
of the functional \eqref{main functional}.
\end{theorem}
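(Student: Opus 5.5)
We argue by the direct method of the calculus of variations on $\mathbb{K}=H^1_g(B_1)$. The functional is well defined on $\mathbb{K}$: Lemma \ref{lemma:fu_gamma_estimate} with $E=B_1$ gives, for every $v\in H^1(B_1)$,
\[
\Big|\int_{B_1} f\,(v^+)^\gamma\,dx\Big|\le C\,\|f\|_{L^{p,\infty}(B_1)}\,\|v\|_{H^1(B_1)}^{\gamma},
\]
with $C$ depending only on $n,p,\gamma$. (For $n=2$ we use \eqref{f-e-2} with a fixed small $\epsilon$; the measure factor is harmless since $|B_1|$ is fixed.) Here we used $(v^+)^\gamma\le |v|^\gamma$ and $\|v^+\|_{H^1}\le\|v\|_{H^1}$.

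\textbf{Coercivity.} For $v\in\mathbb{K}$, write $v=g+w$ with $w\in W^{1,2}_0(B_1)$. By Poincaré's inequality applied to $w$,
\[
\|v\|_{H^1}\le C\big(\|\nabla v\|_{L^2}+\|g\|_{H^1}\big).
\]
Combining this with ellipticity and the bound above,
\[
J(v)\ge \frac{\lambda}{2}\|\nabla v\|_{L^2}^2 - C\|f\|_{L^{p,\infty}}\big(\|\nabla v\|_{L^2}+\|g\|_{H^1}\big)^{\gamma}.
\]
Since $\gamma<2$, the right-hand side tends to $+\infty$ as $\|\nabla v\|_{L^2}\to\infty$. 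Hence $m:=\inf_{\mathbb K}J>-\infty$, because $J(g)<\infty$. Moreover, any minimizing sequence $(v_k)$ is bounded in $H^1(B_1)$.

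\textbf{Compactness and lower semicontinuity.} Extract a subsequence with $v_k\rightharpoonup u$ weakly in $H^1$, $v_k\to u$ strongly in $L^2$, and $v_k\to u$ a.e. The set $\mathbb K$ is convex and closed, hence weakly closed, so $u\in\mathbb K$. The quadratic term is convex and continuous, hence weakly lower semicontinuous. The main point is continuity of the potential term,
\[
\int f\,(v_k^+)^\gamma\to\int f\,(u^+)^\gamma .
\]
We prove this as follows. The inequality $|a^\gamma-b^\gamma|\le|a-b|^\gamma$ for $a,b\ge0$ and $\gamma\in(0,1)$ gives
\[
\Big|\int f\big((v_k^+)^\gamma-(u^+)^\gamma\big)\Big|\le\int |f|\,|v_k-u|^\gamma .
\]
By Hölder's inequality with exponents $q=2^*/\gamma$ and $q'$, together with Lemma \ref{lemma:weak_embedding}, we have $f\in L^{q'}$ because $q'<p$. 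Thus the right-hand side is at most $C\|f\|_{L^{p,\infty}}\|v_k-u\|_{L^{2^*}}^\gamma$. To obtain strong convergence we choose instead an exponent $s<2^*$ with $s/(s-\gamma)<p$. Such an $s$ exists precisely because $p>n/2$, which is the strict inequality noted in the proof of Lemma \ref{lemma:fu_gamma_estimate}. Rellich's compact embedding $H^1\hookrightarrow L^s$ for $s<2^*$ then gives $\|v_k-u\|_{L^s}\to0$. Therefore $J(u)\le\liminf J(v_k)=m$, and $u$ is a minimizer. Choosing the exponent correctly in this step is the only delicate point.

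\textbf{Nonnegativity.} We compare $u$ with $u^+$. Since $g\ge0$, we have $u^-\in W^{1,2}_0$, so $u^+\in\mathbb K$. The potential term depends only on $u^+$, while
\[
\int\langle A\nabla u^+,\nabla u^+\rangle=\int\langle A\nabla u,\nabla u\rangle-\int\langle A\nabla u^-,\nabla u^-\rangle ,
\]
because the gradients of $u^+$ and $u^-$ have disjoint supports a.e. Hence $J(u^+)\le J(u)$, so $u^+$ is also a minimizer. Equality in this comparison forces $\int\langle A\nabla u^-,\nabla u^-\rangle=0$, so $u^-$ is constant, and it vanishes because it has zero trace. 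Therefore $u=u^+\ge0$. Either way, $u^+$ is a nonnegative minimizer in $H^1_g(B_1)$.
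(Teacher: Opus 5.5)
Your proof is correct and follows the paper's route: the direct method, with coercivity from Lemma~\ref{lemma:fu_gamma_estimate} and Poincar\'e, weak compactness, lower semicontinuity of the quadratic part, and comparison with $u^+$ for nonnegativity. The only variation is the potential term. You get strong convergence directly from $|a^\gamma-b^\gamma|\le|a-b|^\gamma$, H\"older, and Rellich in a subcritical $L^s$, whereas the paper uses uniform integrability plus a.e.\ convergence and Vitali. The exponent condition is the same in both: your $s<2^*$, $s/(s-\gamma)<p$ is the paper's $\gamma r'<2^*$, $r<p$ with $s=\gamma r'$.
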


\begin{proof}
    Combining \eqref{fu_gamma_estimate}, or \eqref{f-e-2} with ellipticity we have
\begin{eqnarray}
    J(v) &\ge& \int\limits_{B_1} \frac{\lambda}{2} | \nabla v|^{2}   +  f v^{\gamma} dx \nonumber \\
    &\ge&  \int\limits_{B_1} \frac{\lambda}{2} | \nabla v|^{2}   -  |f| \left( |v - g|^{\gamma}  + |g|^{\gamma} \right) dx \nonumber \\
    &\ge&  \int\limits_{B_1} \frac{\lambda}{2} | \nabla v|^{2} dx  -  C\|f\|_{L^{p,\infty}(B_1)}  \left( \|v - g\|_{H^1(B_1)}^{\gamma}  + \|g\|_{H^1(B_1)}^{\gamma} \right)  \nonumber 
    \end{eqnarray}
    Then by the standard Poincare inequality we have
    \[J(v)\ge  \frac{\lambda}{2} \int\limits_{B_1} | \nabla v|^{2} dx  - C_1\left( \int\limits_{B_1} | \nabla v|^{2} \right)^{\gamma/2} - C_2. \] 
where $C_1 = C(n,  \gamma,  \|f \|_{L^{p,\infty} (B_1)} ) $ and $C_2 = C( \|f \|_{L^{p,\infty} (B_1)} ,  g)$. 
Applying Young's inequality with $\varepsilon > 0$ to the last integral 
in the previous expression, we obtain:
 \[  J(v)
   = (\frac{\lambda}{2} - \varepsilon) \int_{B_1} |\nabla v_j |^2 dx  -  C(n, \gamma, \varepsilon, \| f\|_{L^{p,\infty} (B_1)}, g ). \]

We have verified that the functional $J$ is coercive on $H_g^1(B_1)$. In particular,
\[
\inf_{v \in H_g^1(B_1)} J(v) > -\infty.
\]
Let $$ m: = \inf\limits_{v \in \mathbb{K}} J(v),$$ and $\{v_j\}_{j=1}^{\infty} \in  \mathbb{K}= H_g^1 ( B_1)$, a minimizing sequence. There exists $j_0 \ge 1$ such that 
    $$
        m \le J(v_j) < m + 1, \quad \forall \, j \ge j_0 .
    $$
Moreover,  arguing as before
\begin{eqnarray}
   m +1 > J(v_j) &\ge&  (\frac{\lambda}{2} - \varepsilon ) \int_{B_1} |\nabla v_j |^2 dx  -  C(n, \gamma, \varepsilon, \| f\|_{L^{p,\infty} (B_1)}, g ). \nonumber
\end{eqnarray}
Therefore, by taking $\varepsilon > 0$ small enough, we may conclude 
 that $\{v_j\}_{j=1}^{\infty} $ is a bounded sequence in $H_g^1 ( B_1)$. 
 
Thus, by the Rellich--Kondrachov  theorem, there exists a function $v \in H_g^1 (B_1)$, such that, passing to   subsequence if necessary, 
\begin{eqnarray}
    v_j \to v & &   \; \text{ weakly in } H^1(B), \nonumber \\
v_j \to v & & \;  \text{strongly in } L^q(B) \text{ for all } q<2^*, \nonumber \\
v_j \to v & & \; \text{a.e. in } B_1. \nonumber
\end{eqnarray}

By the weak lower semicontinuity, 
$$
    \int\limits_{B_1} \frac{1}{2} \langle A(x) \nabla v , \nabla v \rangle dx \le \liminf\limits_{j \to \infty} \int\limits_{B_1} \frac{1}{2} \langle A(x) \nabla v_j , \nabla v_j \rangle dx. 
$$
For the second term, let $ 1< r < p$ be such that
\[
\gamma r' < 2^*, \quad \text{where } \frac{1}{r} + \frac{1}{r'} = 1.
\]

By Hölder's inequality, for any measurable set $E \subset B_1$ we have
\[
\int_E |f|\, |v_j|^\gamma \, dx \le 
\|f\|_{L^r(E)} \, \|v_j^\gamma\|_{L^{r'}(B_1)} 
= \|f\|_{L^r(E)} \, \|v_j\|_{L^{\gamma r'}(B_1)}^\gamma.
\]

Since $\gamma r' < 2^*$, the sequence $\{v_j\}_{j=1}^{\infty}$ is bounded in $L^{\gamma r'}(B_1)$, and so there exists $C>0$ independent of $j$ such that
\[
\int_E |f|\, |v_j|^\gamma \, dx \le C \|f\|_{L^r(E)}.
\]

By Lemma \ref{lemma:weak_embedding}, it follows that $\|fv_j^{\gamma}\|_{L^1(E)} \to 0$ as $|E| \to 0$, uniformly in $j$.  
This shows that the sequence $\{f\, v_j^\gamma\}_{j=1}^{\infty} $ is \emph{uniformly integrable} in $L^1(B_1)$.

Moreover, since $v_j \to v$ a.e., we have
\[
f\, v_j^\gamma \to f\, v^\gamma \quad \text{a.e. in } B.
\]

Hence, by Vitali's convergence theorem, we conclude that
\[
\int_B f\, v_j^\gamma \, dx \longrightarrow \int_B f\, v^\gamma \, dx.
\]

Therefore, 
$$
    J(v) \le \liminf\limits_{j \to \infty} J(v_j) = m,
$$
and hence $v$ is a minimum. 
Moreover, since $g\ge 0$ and  $J(\max(u,0)) \le J(u)$, with equality if and only if $0 \le u$, can always be taken to be nonnegative.

\end{proof}


\begin{theorem}[Boundedness of minimizers]\label{thm:boundedness}
Let $u \in H_g^1(B_1)$ be a nonnegative minimizer of \eqref{main functional}, with $g$ as in \eqref{boundary data assumptions}. Then $u$ is uniformly bounded in $B_1$:
\begin{equation}\label{boundedness}
0 \le u \le C \quad \text{a.e. in } B_1,
\end{equation}
where the constant $C>0$ depends only on the dimension, the ellipticity constants,
$\|f\|_{L^{p,\infty}(B_1)}$, and $\|g\|_{L^{\infty}(\partial B_1)}$.
\end{theorem}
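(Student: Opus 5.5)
We will prove the bound \eqref{boundedness} with a De Giorgi truncation argument at levels above $k_0:=\|g\|_{L^\infty(\partial B_1)}$. The comparison will use the minimality of $u$ against its truncation from above, and Lemma \ref{lemma:fu_gamma_estimate} will control the source term on the superlevel sets.

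For $k\ge k_0$, set $A_k:=\{u>k\}$ and $v:=\min(u,k)$. Since $g\le k_0\le k$ on $\partial B_1$, we have $v-g\in W^{1,2}_0(B_1)$, so $v\in\mathbb{K}$ and $J(u)\le J(v)$. The two functions agree off $A_k$, and on $A_k$ we have $\nabla v=0$. Hence
\[
\frac{\lambda}{2}\int_{A_k}|\nabla (u-k)^+|^2\,dx\le \int_{A_k} f\,(k^\gamma-u^\gamma)\,dx\le \int_{A_k}|f|\,\big(u^\gamma-k^\gamma\big)\,dx .
\]
Because $\gamma\in(0,1)$, the elementary inequality $(a+b)^\gamma\le a^\gamma+b^\gamma$ gives $u^\gamma-k^\gamma\le ((u-k)^+)^\gamma$ on $A_k$. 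This step is important: the right-hand side then involves only $w_k:=(u-k)^+\in H^1_0(B_1)$, and does not grow with $k$.

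We then apply Lemma \ref{lemma:fu_gamma_estimate} with $E=A_k$ and $u$ replaced by $w_k$. Since $w_k$ vanishes outside $A_k$ and on $\partial B_1$, Poincaré gives $\|w_k\|_{H^1}\le C\|\nabla w_k\|_{L^2}$. For $n\ge3$ this yields
\[
\|\nabla w_k\|_{L^2}^2\le C\|f\|_{L^{p,\infty}}\|\nabla w_k\|_{L^2}^{\gamma}|A_k|^{\frac{2-\gamma}{2}+\frac{\gamma}{n}-\frac1p},
\quad\text{so}\quad
\|\nabla w_k\|_{L^2}^{2-\gamma}\le C|A_k|^{\frac{2-\gamma}{2}+\frac{\gamma}{n}-\frac1p}.
\]
For $n=2$ we use \eqref{f-e-2} with $\epsilon$ small in the same way. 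Next, for $h>k$, we combine
\[
(h-k)|A_h|\le\int_{A_k}w_k\le |A_k|^{1-\frac1{2^*}}\|w_k\|_{L^{2^*}}\le C|A_k|^{\frac12+\frac1n}\|\nabla w_k\|_{L^2}
\]
with the previous estimate. This gives
\[
|A_h|\le \frac{C}{h-k}\,|A_k|^{\beta},\qquad \beta=\frac12+\frac1n+\frac{1}{2-\gamma}\Big(\frac{2-\gamma}{2}+\frac\gamma n-\frac1p\Big)=1+\frac1n+\frac{1}{2-\gamma}\Big(\frac{\gamma}{n}-\frac1p\Big).
\]

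The main point to check is that $\beta>1$. This reduces to $\frac1n\big(1+\frac{\gamma}{2-\gamma}\big)>\frac{1}{(2-\gamma)p}$, that is, $\frac{2}{n}>\frac1p$, which is exactly the hypothesis $p>n/2$. In dimension two, the loss $\epsilon$ is absorbed by choosing $\epsilon$ small depending on $p$, and $2^*$ is replaced by a large exponent. With $\beta>1$, the standard iteration lemma (Stampacchia) applied to $\varphi(k)=|A_k|$ gives $\varphi(k_0+d)=0$, where $d^{\,1}= C\,\varphi(k_0)^{\beta-1}2^{\beta/(\beta-1)}\le C|B_1|^{\beta-1}$. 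Therefore $u\le k_0+d$ a.e., and $d$ depends only on $n$, $\lambda$, $\gamma$, $p$ and $\|f\|_{L^{p,\infty}(B_1)}$; nonnegativity is given. The only delicate steps are getting the exponent arithmetic right and justifying the truncation comparison. The comparison is legitimate because $u$ minimizes over all of $\mathbb{K}$ and $\min(u,k)\in\mathbb{K}$.
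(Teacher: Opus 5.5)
Your proof is correct and follows essentially the paper's route. You truncate by $\min(u,k)$ for $k\ge\|g\|_{L^\infty(\partial B_1)}$, use $u^\gamma-k^\gamma\le((u-k)^+)^\gamma$, and apply Lemma \ref{lemma:fu_gamma_estimate} to get $\int_{\{u>k\}}|\nabla u|^2\le C|\{u>k\}|^{1-\frac{2}{n}+\delta}$ for some $\delta>0$, which is positive exactly because $p>n/2$.

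The only difference is the closing step. The paper feeds this estimate into the Ladyzhenskaya--Ural'tseva lemma and so needs a separate $L^1$ bound on $\{u>M_0\}$. You instead use H\"older--Sobolev to obtain Stampacchia's recurrence $|A_h|\le C(h-k)^{-1}|A_k|^{\beta}$, and the trivial bound $|A_{k_0}|\le|B_1|$ makes that extra estimate unnecessary. You also treat $n=2$ explicitly, which the paper's proof does not.
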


\begin{proof}

For the upper bound in \eqref{boundedness}, let $u$ be a  minimizer of $J$ over $H_g^1(B_1)$. Let   $M_0 = \| g \|_{L^{\infty} (\partial B_1)}$ and, for every $M \ge M_0$, define $u_M = \min( u, M )$.  Since \( u = g \) on \( \partial B_1 \), we have \( u_M \in H_g^1(B_1) \). Denote by 
$$
    E_M : = \{ x \in B_1 \suchthat u(x) > M \} . 
$$
It follows from the definition of $u_M$ that 
$$
    u_M = u  \;  \text{in} \; E_M^{c} \quad \text{and} \quad u_M = M \; \text{in} \; E_M. 
$$
By the minimality of $u$ we estimate
\begin{eqnarray}
    0 &\le& J(u_M) - J(u) \nonumber \\
    &\le&  \int\limits_{E_M} - \frac{1}{2} \langle A \nabla u, \nabla u \rangle  +  f(x) [ (M )^{\gamma} - u^{\gamma}] dx. \nonumber 
\end{eqnarray}
Hence, 
$$
    \frac{\lambda}{2}\int\limits_{E_M} | \nabla u|^2  dx  \le  \int\limits_{E_M} \frac{1}{2} \langle A \nabla u, \nabla u \rangle  dx  \le  \int\limits_{E_M}   |f|   | u - M |^{\gamma} dx ,
$$
since $|t^{\gamma} - s^{\gamma} | \le |t -s|^{\gamma}$ , for $\gamma \in (0,1)$  and $t,s \ge 0$.  
Next,  applying \eqref{fu_gamma_estimate} followed by Poincar\'e inequality to $(u -M)_+  \in H_0^1(E_m)$,  yields
$$
     \int\limits_{E_M}   |f|   | u - M |^{\gamma} dx \le C \| f\|_{L^{p,\infty}(E_M)} \| \nabla (u - M )\|_{L^2 (E_M)} ^{\gamma} |E_M|^{\frac{2-\gamma}{2} + \frac{\gamma}{n} - \frac{1}{p}}
$$
Thus, 
\begin{equation}\label{decay for E_M}
     \int\limits_{E_M} |\nabla u|^2 dx \le C |E_M|^{\frac{2}{2-\gamma} \left( \frac{2-\gamma}{2} + \frac{\gamma}{n} - \frac{1}{p} \right)} = C|E_M|^{1 - \frac{2}{n} + \delta}
\end{equation}
where 
$$
    \delta = \frac{2}{n} + \frac{2}{2-\gamma} ( \frac{\gamma}{n} - \frac{1}{p} )
$$
is a positive number, since $p > \frac{n}{2}. $  Therefore,  by  \cite[Chapter~2, Lemma~5.3]{Ladyzhenskaya-Ural'tseva}, 
$$
    \| u \|_{L^{\infty} (B_1)} \le \tilde{C}
$$
for a constant $\tilde{C} > 0$ depending only on dimension , $\delta$, $M_0$, the constant $C$ from estimate \eqref{decay for E_M} and $\| u\|_{L^1 (E_{M_0})}$. 
We now complete the proof by deriving an $L^1$ estimate on $E_{M_0}$. 
Since $(u-M_0)_+ \in H_0^1(B_1)$, by Poincar\'e's inequality and \eqref{decay for E_M}, we obtain
\begin{eqnarray}
    \int_{E_{M_0}} |u| \; dx &\le& \int_{B_1} (u - M_0)_+ \;dx + M_0 | E_{M_0}| \nonumber \\
    &\le&  |B_1|^{\frac{1}{2}} \|(u- M_0)_+\|_{L^2(B_1)} + M_0 | E_{M_0}| \nonumber \\ 
    &\le& c(n) |B_1|^{\frac{1}{n} }\|\nabla (u - M_0)_+ \|_{L^2(B_1)} + M_0 | E_{M_0}| \nonumber \\ 
    &\le& c(n) |B_1|^{\frac{1}{n} }\|\nabla u\|_{L^2(E_{M_0})} + M_0 | E_{M_0}| \nonumber \\ 
    &\le& C 
\end{eqnarray}
where $C > 0$ depends only  on the dimension, the ellipticity constants,
$\|f\|_{L^{p,\infty}(B_1)}$, and $\|g\|_{L^{\infty}(\partial B_1)}$.

\end{proof}

\begin{remark}[Minimal assumptions on $g$]
\label{rem:weak-g}
We finish this section by remarking that the assumptions $g \in C(\overline{B_1})$, $g \ge 0$, $g \not\equiv 0$
were imposed above for simplicity. 
In fact, the results remain valid under weaker assumptions on $g$:

\begin{itemize}
    \item For existence of a minimizer, it is sufficient that
    $g \in H^{1/2}(\partial B_1)$ and $g \ge 0$ (in the trace sense).
    \item For the $L^\infty$ boundedness of minimizers, it is enough that
    $g \in L^\infty(\partial B_1)$ and $g \ge 0$.
\end{itemize}

\end{remark} 

\section{Universal H\"older estimates} \label{sct local reg}

In this section, we establish local H\"older continuity for minimizers of $J_{A,f}$. 
The main strategy relies on a multiscale analysis combining approximation and iterative oscillation control.

Since our interests are local, it will be convenient to work with local minimizers of \eqref{main functional}. We start this section by recalling the definition of local minimum . 

\begin{definition}(\cite[Definition 1.4]{AltPhilips86})
    A nonnegative $u \in H^1 (B_1)$ is a local minimum relative to a subdomain $D \subset B_1$, if $J(u) \le J(u+v)$ for all $v \in H_0^1 (D)$. 
\end{definition}

 Note that if $u$ is a minimizer of $J$ over $H_g^1(B_1)$, then $u$ is a local minimizer of $J$ relative to any subdomain $D \subset B_1$.


\begin{lem}\label{Caccioppoli style}
    Let $u$ be a local minimizer of $J_{A, f}$. Let $h$ be the unique weak solution to
    $$
         \left\{\begin{matrix}
\D (A(x) \nabla h) = 0 & \text{in} &B_{R}(x_0)\\
h = u &\text{in} & B_1 \setminus B_{R} (x_0)
\end{matrix}\right.
    $$
    where $x_0 \in B_{1/2}$, and $0 < R < 1/2$. Then there exists a constant $C > 0$, only depending upon $n, p, \gamma, \lambda, \Lambda$ such that 
    \begin{equation}
        \fint\limits_{B_{R}(x_0)} | u - h |^2 dx \le C \|f \|_{L^{p,\infty}(B_1)}^{\frac{2}{2-\gamma}} R^{\sigma_0} 
    \end{equation}
    where $\sigma_0=\frac{2(2-n/p)}{2-\gamma}$ if $n\ge 3$ and $\sigma_0=\frac{2(2-2/p-\epsilon)}{2-\gamma}$ if $n=2$.
\end{lem}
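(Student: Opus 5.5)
We compare energies of $u$ and $h$ on $B_R:=B_R(x_0)$, using that $u-h\in H^1_0(B_R)$ and that $h$ is $A$-harmonic there. Since $u-h$ vanishes outside $B_R$, the function $h$ is an admissible competitor for the local minimality of $u$ relative to $D=B_R$. From $J(u)\le J(h)$ we get
\[
\int_{B_R}\tfrac12\langle A\nabla u,\nabla u\rangle - \tfrac12\langle A\nabla h,\nabla h\rangle\,dx \le \int_{B_R} f\,\big((h^+)^\gamma-(u^+)^\gamma\big)\,dx .
\]
Testing the equation for $h$ with $u-h\in H^1_0(B_R)$ gives $\int_{B_R}\langle A\nabla h,\nabla(u-h)\rangle=0$. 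Using this together with the symmetry of $A$, the left-hand side equals $\tfrac12\int_{B_R}\langle A\nabla(u-h),\nabla(u-h)\rangle\ge \tfrac{\lambda}{2}\int_{B_R}|\nabla(u-h)|^2$.

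For the right-hand side we use the elementary inequality $|t^\gamma-s^\gamma|\le|t-s|^\gamma$ for $t,s\ge0$, together with $|h^+-u^+|\le|h-u|$. This reduces the right-hand side to $\int_{B_R}|f|\,|u-h|^\gamma$. We then apply Lemma \ref{lemma:fu_gamma_estimate} with $E=B_R$ to the function $w=u-h$, extended by zero. Since $w\in H^1_0(B_R)$, the Sobolev inequality is scale invariant, so $\|w\|_{L^{2^*}}\le C\|\nabla w\|_{L^2(B_R)}$ with a constant independent of $R$. This is the one point needing care: the lemma as stated uses the full $H^1$ norm, and we must replace it by the gradient norm through the zero-extension. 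For $n\ge3$ this yields
\[
\lambda\|\nabla w\|_{L^2(B_R)}^2\le C\|f\|_{L^{p,\infty}(B_1)}\|\nabla w\|_{L^2(B_R)}^\gamma R^{n\left(\frac{2-\gamma}{2}+\frac{\gamma}{n}-\frac1p\right)} .
\]
Dividing by $\|\nabla w\|^\gamma$ and raising to the power $2/(2-\gamma)$ gives
\[
\|\nabla w\|_{L^2(B_R)}^2\le C\|f\|^{\frac{2}{2-\gamma}}_{L^{p,\infty}(B_1)} R^{\,n+\frac{2\gamma-2n/p}{2-\gamma}} .
\]

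We then apply the Poincaré inequality on $B_R$ for $w\in H^1_0(B_R)$, namely $\int_{B_R}|w|^2\le CR^2\int_{B_R}|\nabla w|^2$, and divide by $|B_R|\sim R^n$. The resulting power of $R$ is
\[
2+\frac{2\gamma-2n/p}{2-\gamma}=\frac{4-2n/p}{2-\gamma}=\frac{2(2-n/p)}{2-\gamma}=\sigma_0 ,
\]
as claimed. For $n=2$ we run the same argument with \eqref{f-e-2}, where the measure exponent $1-\tfrac1p-\epsilon$ replaces the one above. Since $\gamma/n=\gamma/2$ when $n=2$, the exponent $\tfrac{2-\gamma}{2}+\tfrac{\gamma}{n}-\tfrac1p$ equals $1-\tfrac1p$, so the same bookkeeping gives $\sigma_0=\frac{2(2-2/p-\epsilon)}{2-\gamma}$. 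Here $2^*$ is a large finite exponent and the embedding $H^1_0(B_R)\hookrightarrow L^{2^*}$ is not scale invariant. The $R$-dependence of its constant must be absorbed, which we do by rescaling to $B_1$; this is where the $\epsilon$ loss comes from.

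The main obstacle is this scaling step. The constants in Lemma \ref{lemma:fu_gamma_estimate}, as written with the full $H^1$ norm, must not degenerate as $R\to0$. We would handle it either by the zero-extension remark above or by rescaling $w_R(y)=w(x_0+Ry)$ to the unit ball, applying the lemma there, and scaling back.
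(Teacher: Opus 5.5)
Your proposal is correct and follows essentially the same route as the paper. Both arguments compare $J(u)\le J(h)$ on $B_R(x_0)$, use the $A$-harmonicity of $h$ to rewrite the energy difference as $\frac12\int_{B_R}\langle A\nabla(u-h),\nabla(u-h)\rangle$, bound the right-hand side by $\int|f|\,|u-h|^\gamma$, apply Lemma \ref{lemma:fu_gamma_estimate} with a gradient-only Sobolev bound for $u-h\in H^1_0(B_R)$, absorb, and finish with Poincar\'e. Your zero-extension remark just makes explicit the uniformity in $R$ that the paper uses tacitly; one small correction is that for $n=2$ rescaling to $B_1$ removes the $\epsilon$-loss rather than causing it, because the factor $R^{2\gamma/2^*}$ from the embedding cancels $|B_R|^{-\gamma/2^*}$ in the H\"older step, which only strengthens the conclusion.
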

\begin{proof}
    Since 
    $$
        J(u, B_{R}(x_0)) \le J(h, B_{R}(x_0)),
    $$
    we have
    \begin{equation}
        \int\limits_{B_{R}} \langle A(x) \nabla u, \nabla u \rangle  - \langle A(x) \nabla h, \nabla h \rangle dx  \le  \int\limits_{B_{R}} f(x) [ h^{\gamma} - u^{\gamma}] dx . 
    \end{equation}
   From the PDE satisfied by $h$, 
    \begin{align}\label{estimate LHS Grad}
        \frac{\lambda}{2} \int\limits_{B_{R}} |\nabla (u - h) |^2 dx &\le \frac{1}{2} \int\limits_{B_R} \langle A \nabla (u -h) , \nabla (u - h) \rangle dx  \\
        &=\frac 12\int_{B_R} \langle A(x)\nabla u,\nabla (u-h)\rangle \nonumber \\
        &=\frac 12\int_{B_R}\langle A\nabla u,\nabla u\rangle-\frac 12\int_{B_R}\langle A\nabla u, \nabla h\rangle \nonumber\\
        &=\frac 12\int_{B_R} \langle A\nabla u,\nabla u \rangle -\frac 12 \int_{B_R} \langle A(\nabla u-\nabla h+\nabla h,\nabla h \rangle \nonumber \\
        &=\frac{1}{2}  \int\limits_{B_{R}} \langle A(x) \nabla u, \nabla u \rangle  - \langle A(x) \nabla h, \nabla h \rangle dx .\nonumber
    \end{align}
  Moreover, since $p > \frac{n}{2}$, we set $r$ to be $\frac 1r=1-\frac{(n-2)\gamma}{2n}$ if $n\ge 3$ and $\frac 1r=1-\epsilon$ if $n=2$. Then applying  \eqref{fu_gamma_estimate} and \eqref{f-e-2}) we have 
    \begin{eqnarray} \label{estimate RHS}
         \int\limits_{B_{R}} f(x) [ h^{\gamma} - u^{\gamma}] dx &\le&  \int\limits_{B_{R}} |f(x)| \,  | h - u|^{\gamma} dx \nonumber \\
        &\le&c(p,n,\gamma)  \, \|f \|_{L^{p,\infty} (B_R)}  \| u  - h \|_{H^1(B_R)}^{\gamma} \,|B_R|^{\frac{1}{r} - \frac{1}{p}} \nonumber \\
        &\le& c(p,n,\gamma)  \, \|f \|_{L^{p,\infty} (B_R)}  \| \nabla (u  - h) \|_{L^2(B_R)}^{\gamma} \,|B_R|^{\frac{1}{r} - \frac{1}{p}}.
    \end{eqnarray}
   where the last inequality follows from the Poincar\'e inequality.
    Putting \eqref{estimate LHS Grad} and \eqref{estimate RHS} together we get
    
   \begin{equation*}
        \frac{\lambda}{2} \int\limits_{B_{R}} |\nabla (u - h) |^2 dx  \le c(p, n, \gamma)\|f \|_{L^{p,\infty} (B_R)}  \|  \nabla( u - h)  \|_{L^2 (B_R)}^{\gamma}  |B_R |^{\frac{1}{r} - \frac{1}{p}} .
   \end{equation*}
   Therefore, 
    \begin{equation}\label{ineq with gradient}
    \| \nabla (u - h) \|_{L^2 (B_R)}^{2 - \gamma} \le c(\lambda, n,  p,\gamma) \|f \|_{L^{p,\infty} (B_R)}  |B_R |^{\frac{1}{r} - \frac{1}{p}} .
   \end{equation}
    Next, using $p>n/2$ and $R<1$, we combine \eqref{ineq with gradient} and the Poincar\'e inequality once more to obtain
    \begin{eqnarray}\label{using poincare}
        \int_{B_R} |u - h|^2 dx &\le& c(n) R^2 \int_{B_R} |\nabla (u - h) |^2 dx \nonumber \\
        &\le&  C R^2\;  \|f \|_{L^{p,\infty} (B_R)}^{\frac{2}{2-\gamma}}   |B_R |^{\frac{2}{2-\gamma}( \frac{1}{r}- \frac{1}{p})} \nonumber\\
        &=& C \|f \|_{L^{p,\infty} (B_R)}^{\frac{2}{2-\gamma}}   R^{2 + \frac{2n}{2-\gamma} (\frac{1}{r} -\frac{1}{p}) } . \nonumber \nonumber \\
        &\le& C\|f \|_{L^{p,\infty} (B_R)}^{\frac{2}{2-\gamma}} R^{n+\sigma_0}.\nonumber
    \end{eqnarray}
    
\end{proof}

 Before proceeding, we recall that solutions the  homogeneous equation $\D(A(x) \nabla h ) = 0$, are  H\"older continuous with estimates
\begin{equation}\label{continuity of h}
    \| h\|_{C^{0,\alpha_0} (B_{1/4})} \le C_*\| h\|_{L^2 (B_{1/2})} ,
\end{equation}
for some $\alpha_0 \in (0,1)$ and a constant $C_* > 0$ depending only on $n, \lambda, \Lambda$. 

This estimate provides a first-scale control of minimizers of $J_{A,f}$.

\begin{prop}\label{choice of constants prop}
     There exist universal constants $\varepsilon_0 > 0$, $0 < \rho < 1/4$, depending only on $n, \lambda, \Lambda$, such that: if $u$ be a local minimizer of $J_{A,f}$, with $\fint_{B_1} u^2 dx \le 1$, $A \in \mathcal{M}_{\lambda, \Lambda} (B_1)$, and  $\| f \|_{L^{p,\infty} (B_1)} \le \varepsilon_0$,  then 
    \begin{equation}
        \fint_{B_{\rho}} |u - \mu_1| ^2 dx \le \rho^{\alpha}, 
    \end{equation}
    for some universally bounded constant $|\mu_1 | \le C_1(n, \lambda, \Lambda)$, and any 
    $$
        0 < \alpha < \min \left\{ 2 \alpha_0 , 2 \theta \right\} 
    $$
\end{prop}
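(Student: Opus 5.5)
We compare $u$ with the $A$-harmonic replacement $h$ on $B_{1/2}$ given by Lemma \ref{Caccioppoli style}, with $x_0=0$ and $R=1/2$. We then transfer to $u$ the first-scale decay that $h$ enjoys by \eqref{continuity of h}. Take $\mu_1:=h(0)$. The proof has three steps, and the constants are fixed only at the end.

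\emph{Step 1: control of $h$.} Since $h-u\in H^1_0(B_{1/2})$ and $h$ minimizes the Dirichlet energy $\int\langle A\nabla v,\nabla v\rangle$ among such competitors, we get $\|h\|_{L^2(B_{1/2})}\le C(\|u\|_{L^2(B_{1/2})}+\|\nabla u\|_{L^2(B_{1/2})})$. To bound $\nabla u$ without assuming more than $\fint_{B_1}u^2\le1$, we use a Caccioppoli inequality for the minimizer. It follows from comparing $u$ with $u-\eta^2(u-k)$ and absorbing the source term through Lemma \ref{lemma:fu_gamma_estimate} and Young's inequality, using $\|f\|_{L^{p,\infty}}\le\varepsilon_0\le1$. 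A simpler route avoids this: Lemma \ref{Caccioppoli style} directly gives $\|u-h\|_{L^2(B_{1/2})}\le C\varepsilon_0^{1/(2-\gamma)}$. By the maximum principle $\|h\|_{L^\infty(B_{1/4})}$ is controlled, via De Giorgi's local $L^\infty$ bound, by $\|h\|_{L^2(B_{1/2})}\le \|u\|_{L^2}+\|u-h\|_{L^2}\le C$. Either way, \eqref{continuity of h} yields $|\mu_1|=|h(0)|\le C_1(n,\lambda,\Lambda)$ and $|h(x)-h(0)|\le C_*C|x|^{\alpha_0}$ on $B_{1/4}$.

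\emph{Step 2: the splitting.} For $\rho<1/4$ we split
\[
\fint_{B_\rho}|u-\mu_1|^2\le 2\fint_{B_\rho}|u-h|^2+2\fint_{B_\rho}|h-h(0)|^2\le 2\rho^{-n}2^{-n}\fint_{B_{1/2}}|u-h|^2+C\rho^{2\alpha_0}.
\]
By Lemma \ref{Caccioppoli style}, the first term is at most $C\rho^{-n}\varepsilon_0^{2/(2-\gamma)}$.

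\emph{Step 3: choice of constants.} Since $\alpha<2\alpha_0$, we first pick $\rho$ small so that $C\rho^{2\alpha_0}\le\frac12\rho^{\alpha}$. Then we pick $\varepsilon_0$ so that $C\rho^{-n}\varepsilon_0^{2/(2-\gamma)}\le\frac12\rho^\alpha$. The restriction $\alpha<2\theta$ plays no role at this first scale. It is retained only so that the iteration in the subsequent argument closes under the rescaling $u_k(x)=\rho^{-k\alpha/2}u(\rho^kx)$, whose source $\rho^{k(2-\gamma\alpha/2)}f(\rho^k x)$ must stay below $\varepsilon_0$ in $L^{p,\infty}$. That requires $2-\frac np-\frac{\gamma\alpha}{2}\ge\frac{\alpha}{2}$, i.e.\ $\alpha\le2\theta$.

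The main obstacle is Step 1, namely guaranteeing a universal bound on $h$ from only the $L^2$ bound on $u$. The route through Lemma \ref{Caccioppoli style} plus the $L^2\to L^\infty$ estimate for $h$ should handle it. The constant in Lemma \ref{Caccioppoli style} must then be checked to be independent of $\|\nabla u\|$, which holds because its proof only uses $\nabla(u-h)$.
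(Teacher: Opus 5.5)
Your proof is correct and follows the paper's route: the $A$-harmonic replacement $h$ on $B_{1/2}$, $\mu_1=h(0)$, the $L^2$ bound on $h$ from Lemma~\ref{Caccioppoli style} together with $\fint_{B_1}u^2\le 1$, the two-term split, and fixing $\rho$ before $\varepsilon_0$; your factor $(2\rho)^{-n}$ on $\fint_{B_\rho}|u-h|^2$ is more accurate than the paper's $\rho^{\sigma_0}$, which would only hold if $h$ were the replacement on $B_\rho$, and the factor is harmless because $\varepsilon_0$ is chosen last. Your side remark on the iteration has an algebra slip: the rescaled source is $\rho^{k(2-\alpha+\gamma\alpha/2)}f(\rho^k x)$, so the requirement is $\alpha(1-\gamma/2)\le 2-n/p$, i.e.\ $\alpha\le 2\theta$, whereas the inequality you wrote gives $\alpha\le 2(2-n/p)/(1+\gamma)$; this does not affect the proof of the statement itself.
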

\begin{proof}
    Let $h$ be the harmonic replacement of $u$ in $B_{1/2}$. By Lemma \ref{Caccioppoli style},
    $$
        \int\limits_{B_{1/2}} |u - h |^{2} dx \le  C \|f \|_{L^{p,\infty} (B_1)}  ^{\frac{2}{2 - \gamma}}. 
    $$
    where $C >0$ depends only on $n,\lambda,\Lambda$, $p$ and $\gamma$.  
    Thus, 
    \begin{eqnarray}\label{to estimate h in L^2}
        \| h\|_{L^2 (B_{1/2})} &\le&  \| u - h \|_{L^2 (B_{1/2})} + \| u \|_{L^2 (B_{1/2})} \nonumber \\ 
        &\le&  C \|f \|_{L^{p,\infty} (B_1)}  ^{\frac{1}{2 - \gamma}} + \| u \|_{L^2 (B_{1})}  \nonumber \\
        &\le& C\varepsilon_0^{1/(2 - \gamma)}  + \sqrt{|B_{1}|}  \nonumber \\ 
        &\le& (C +1) \sqrt{|B_{1}|} 
    \end{eqnarray}
    if we choose $\varepsilon_0 \le \left( \sqrt{|B_{1}|} \right)^{2 - \gamma} $ . 
Now set \(\mu_1 := h(0)\). 
By Lemma~\ref{Caccioppoli style}, the Hölder continuity of \(h\) (see \eqref{continuity of h}), and \eqref{to estimate h in L^2}, 
we obtain, for \(\rho < \tfrac{1}{4}\) to be chosen later,
\begin{align*}
\fint_{B_{\rho}} |u - \mu_1|^2 \, dx 
&\le 2 \left( \fint_{B_{\rho}} |u - h|^2 \, dx + \fint_{B_{\rho}} |h - \mu_1|^2 \, dx \right) \\
&\le 2 \left( \fint_{B_{\rho}} |u - h|^2 \, dx +C_* \| h \|_{C^{0,\alpha_0}(B_{1/2})}^2 \rho^{2\alpha_0} \right) \\
&\le 2 \left( \fint_{B_{\rho}} |u - h|^2 \, dx + C_* \| h \|_{L^2(B_{1/2})}^2 \rho^{2\alpha_0} \right) \\
&\le 2 \left( C\, \varepsilon^{\frac{2}{2-\gamma}} \rho^{\sigma_0} 
+ \tilde{C_*} (C + 1) \rho^{2\alpha_0} \right),
\end{align*}
for all \( \rho \in (0, \tfrac{1}{4}) \).
Next we choose $0 < \rho < 1/4$ such that 
    $$
         C_{*} (C + 1) \rho^{2\alpha_0} \le \frac{1}{4} \rho^{\alpha}
    $$
   for $\alpha < 2 \alpha_0 $. In  the sequel, we choose $\varepsilon_0$ even smaller such that 
    $$
        C \varepsilon_0^{\frac{2}{2-\gamma}} \rho^{\sigma_0} \le  C \varepsilon_0^{\frac{2}{2-\gamma}} \le  \frac{1}{4} \rho^{\alpha} .
    $$
    Hence,
    $$
         \fint_{B_{\rho}} |u - \mu_1| ^2 dx \le \rho^{\alpha} .
    $$
    
    
\end{proof}

In what follows, we iterate the previous estimate to establish the continuity of $u$ under a smallness condition on the source term $f$.

\begin{prop}\label{reg under smallness}
    There exist constants $\varepsilon_0 \in (0,1)$ and  $\rho \in (0, \frac{1}{4} )$, depending only on $n$,  such that:  for $\varepsilon \in (0, \varepsilon_0)$ and $ u$ a local minimizer of $J_{A,f}$,  with $\displaystyle \fint_{B_1} u^2  dx \leq 1$, $A \in \mathcal{M}_{\lambda, \Lambda}(B_1)$, and $\| f\|_{L^{p,\infty} (B_1)} \le \varepsilon $, there holds 
    \begin{equation}
        |u (x) - u (0) | \le C |x|^{\beta}, \quad \forall |x| \,  \le \rho
    \end{equation}
     for some   $\beta \in (0,1)$, $C >0$,  universal constants, depending additionally of $\varepsilon_0$, $\rho$. 
\end{prop}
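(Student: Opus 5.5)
We iterate Proposition \ref{choice of constants prop} through a rescaling argument. The goal is to produce a sequence of constants $\mu_k$ such that
\[
\fint_{B_{\rho^k}} |u-\mu_k|^2\,dx \le \rho^{k\alpha}, \qquad |\mu_{k+1}-\mu_k| \le C_1 \rho^{k\alpha/2}, \qquad k\ge 1,
\]
with $\mu_0=0$. We then read off the pointwise estimate from these bounds.

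\emph{Base case.} The case $k=1$ is exactly Proposition \ref{choice of constants prop}.

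\emph{Inductive step.} Assume the statement holds at level $k$. Define
\[
v_k(x) := \frac{u(\rho^k x)-\mu_k}{\rho^{k\alpha/2}}, \qquad x\in B_1 .
\]
By the induction hypothesis, $\fint_{B_1} v_k^2 \le 1$.

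We need to check that $v_k$ is a local minimizer of a functional of the same type. A change of variables shows that $v_k$ minimizes
\[
\int_{B_1}\tfrac12\langle A_k\nabla v,\nabla v\rangle + f_k\,\big((v+\mu_k\rho^{-k\alpha/2})^+\big)^\gamma\,dx,
\]
where $A_k(x)=A(\rho^k x)\in\mathcal{M}_{\lambda,\Lambda}$ and $f_k(x)=\rho^{k(2-\alpha)-k\gamma\alpha/2}f(\rho^kx)$.

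\emph{Main obstacle: the shifted potential.} The potential is now a function of the shifted variable $v+m_k$, with $m_k=\mu_k\rho^{-k\alpha/2}$, rather than $(v^+)^\gamma$. So Proposition \ref{choice of constants prop} does not apply verbatim. I expect this is harmless. The proofs of Lemma \ref{Caccioppoli style} and Proposition \ref{choice of constants prop} only use the following facts:
\begin{itemize}
\item the energy comparison with the $A_k$-harmonic replacement $h$;
\item the inequality $|(a+m)_+^\gamma-(b+m)_+^\gamma|\le |a-b|^\gamma$;
\item the estimate \eqref{fu_gamma_estimate} applied to $u-h\in H^1_0(B_R)$.
\end{itemize}
All three are insensitive to the shift. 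So I would first record that the proposition holds, with the same $\varepsilon_0,\rho$, for the translated functionals $J^{m}_{A,f}(v)=\int \frac12\langle A\nabla v,\nabla v\rangle+f((v+m)^+)^\gamma$, uniformly in $m\in\mathbb R$. The constant $|\mu_1|\le C_1$ also survives, since it is $h(0)$ with $h$ bounded by \eqref{to estimate h in L^2}.

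\emph{Smallness of the rescaled source.} Using $|\{|f(\rho^k\cdot)|>t\}|=\rho^{-kn}|\{|f|>t\}\cap B_{\rho^k}|$, we get
\[
\|f_k\|_{L^{p,\infty}(B_1)} \le \rho^{k(2-\alpha-\gamma\alpha/2 - n/p)}\,\|f\|_{L^{p,\infty}(B_1)} .
\]
The exponent is nonnegative precisely when $\alpha\le 2(2-n/p)/(2+\gamma)$. Indeed, for the $L^2$-average normalization the natural exponent is $\alpha/2<\theta$, and one checks that $(2-n/p)/(2-\gamma)$ dominates this threshold after possibly shrinking $\alpha$. So we fix $\alpha$ smaller than $2\alpha_0$ and small enough that this power of $\rho$ is at most $1$. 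Then $\|f_k\|\le\varepsilon<\varepsilon_0$.

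\emph{Closing the induction.} Applying the shifted proposition to $v_k$ gives a constant $\tilde\mu$ with $|\tilde\mu|\le C_1$ and
\[
\fint_{B_\rho}|v_k-\tilde\mu|^2\le\rho^\alpha .
\]
Setting $\mu_{k+1}=\mu_k+\rho^{k\alpha/2}\tilde\mu$ and scaling back yields the $(k+1)$ statement.

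\emph{Conclusion.} The bound $|\mu_{k+1}-\mu_k|\le C_1\rho^{k\alpha/2}$ shows that $(\mu_k)$ is Cauchy, with limit $\mu_\infty$ satisfying $|\mu_\infty-\mu_k|\le C\rho^{k\alpha/2}$. Hence
\[
\fint_{B_{\rho^k}}|u-\mu_\infty|^2\le C\rho^{k\alpha}.
\]
Lebesgue differentiation gives $\mu_\infty=u(0)$ for the precise representative.

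For $\rho^{k+1}<|x|\le\rho^k$, there are two ways to finish. One is a Campanato-type argument: the same estimate holds at every center near $0$ (by translation invariance of the hypotheses, after restricting to $B_{1/2}(y)$ and rescaling), and Campanato's characterization then yields $|u(x)-u(0)|\le C|x|^\beta$ with $\beta=\alpha/2$. Alternatively, one can compare directly through nested balls. In either case the constants depend only on $\varepsilon_0$, $\rho$, $C_1$ and the universal data.
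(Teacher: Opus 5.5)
Your proposal is correct and follows essentially the paper's argument. You rescale at the origin and apply Proposition~\ref{choice of constants prop} to the rescaled functional, which gives $\fint_{B_{\rho^k}}|u-\mu_k|^2\le\rho^{k\alpha}$ with $|\mu_{k+1}-\mu_k|\le C_1\rho^{k\alpha/2}$, and you conclude via Campanato. You are more careful than the paper on two points: you check that the shifted potential $(v+m)_+^\gamma$ is harmless, and you note that Campanato needs the decay at all nearby centers.

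One slip: the rescaled source is $f_k(x)=\rho^{k(2-\alpha+\gamma\alpha/2)}f(\rho^k x)$. Your exponent has the wrong sign on the $\gamma$ term, so the natural constraint is $\alpha\le 2\theta$, not $\alpha\le 2(2-n/p)/(2+\gamma)$. Your condition is stricter and $\alpha$ may be taken small, so nothing breaks.
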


\begin{proof}
    Let $\varepsilon_0 > 0$ and $\rho > 0$ be the constants given by Proposition \ref{choice of constants prop}. Define  $u_1 : B_1 \to \mathbb{R}$,  by 
    $$
        u_1 (x) = \frac{u(\rho x) - \mu_1}{\rho^{\alpha/2}}.
    $$
    Then, by the previous proposition $\fint_{B_1} u_1^2 dx \le 1$, and one can check that $u_1$ is a local minimizer of 
    $$
        J_1(v, B_1) = \int\limits_{B_1} \left\{ \frac{1}{2}\langle\Tilde{A} (x) \nabla v, \nabla v\rangle  +   \Tilde{f}(x) (v + \rho^{-\alpha/2}\mu_1 )_+^{\gamma} \right\} dx
    $$
    where $\Tilde{A}(x) = A(\rho x)$ has the same ellipticity constants as $A$,  and 
    \[\Tilde{f}(x) =  \rho^{2 -\alpha(1 - \gamma/2)} f(\rho x).\]  Indeed, for any $\varphi \in H_0^1(B_1)$, with sufficiently small norm, we define 
    \begin{eqnarray}
        v(y) &=& \mu_1 + \rho^{\alpha /2 } (u_1 (y/\rho ) + \varphi (y/\rho)),  \;\;\; y =\rho x\nonumber \\
        &=& u(y) + \rho^{\alpha /2} \varphi (y/\rho).
    \end{eqnarray}
    Then, $v -u \in H_0^1 (B_{\rho})$ and by the local minimality of $u$, 
    $$
        J(u, B_{\rho}) \le J(v, B_{\rho} ). 
    $$
    After a change of variables, 
    $$
         J(u, B_{\rho})  = \rho^{n + \alpha -2} J_1 (u_1, B_1), \; \; \text{and} \;\; J(v, B_{\rho}) = \rho^{n +\alpha -2} J_1 (u_1 + \varphi , B_1).
    $$
    Since $\tilde{f}(x)$ satisfies
    $$
        \| \tilde{f} \|_{L^{p,\infty}(B_1)} \le \rho^{2 -\alpha(1 - \gamma/2) - n/p} \| f\|_{L^{p,\infty} (B_1)} \le \| f\|_{L^{p,\infty} (B_1)} ,
    $$
    as long as  $\alpha < \frac{2 - n/p}{1 - \gamma/2}$, we can  apply  Proposition \ref{choice of constants prop} to find another constant $\tilde{\mu}$, with $|\tilde{\mu}| \le C_1(n, \lambda, \Lambda)$, such that 
    $$
        \fint_{B_{\rho}} |u_1 - \tilde{\mu}|^2 dx \le \rho^{\alpha},
    $$
    which translates back to $u$ as
    $$
        \fint_{B_{\rho^2}} |u - \mu_2|^2 dx \le \rho^{2\alpha}.
    $$
    where $\mu_2 = \mu_1 + \rho^{\alpha/2} \tilde{\mu}$.  Proceeding with the inductive argument, we conclude that
    $$
        \fint_{B_{\rho^{k}}} |u - \mu_{k}|^2 dx \le \rho^{k\alpha}.
    $$
    where $\mu_{k} = \mu_{k-1} +  \rho^{(k-1)\frac{\alpha}{2}} \tilde{\mu} _{k-1}$. It follows from universal bounds that
    $$
        |\mu_{k} - \mu_{k-1} | \le C_1\rho^{(k-1)\frac{\alpha}{2}}.
    $$
    Thus the above sequence is convergent.  Let $\mu_{\infty} = \lim\limits_{k \to \infty} \mu_k$. We estimate
    $$
        |\mu_{\infty} - \mu_k| \le \lim\limits_{N \to \infty} C_1 \sum\limits_{i = k}^N \rho^{i(\alpha/2)} \le \frac{C_1}{1 - \rho^{\alpha/2}} \rho^{k(\alpha/2)} .
    $$
    We conclude then 
    \begin{eqnarray}
         \fint_{B_{\rho^{k}}} |u - \mu_{\infty}|^2 dx &\le & 2 \left(  \fint_{B_{\rho^{k}}} |u - \mu_{k}|^2 dx  +  \fint_{B_{\rho^{k}}} |u_{k} - \mu_{\infty}|^2 dx \right) \nonumber \\
         &\le& \left[ 1+ \left( \frac{C_1}{1 - \rho^{\alpha/2}}\right)^2 \right] \rho^{k\alpha}
    \end{eqnarray}
    Now, let $u_B = \fint_B u dx$. Then 
    \begin{eqnarray}
        \fint_{B_{\rho^{k}}} |u - u_{B_{\rho^k}} |^2 dx \le 2\left( \fint_{B_{\rho^{k}}} |u - \mu_{\infty} |^2 dx + \fint_{B_{\rho^{k}}} |\mu_{\infty} - u_{B_{\rho^k}} |^2 dx\right) \nonumber
    \end{eqnarray}
    Moreover, 
   \begin{eqnarray}
       \fint_{B_{\rho^{k}}} |\mu_{\infty} - u_{B_{\rho^k}} |^2 dx &=& \fint_{B_{\rho^k}} \left| \fint_{B_{\rho^k}} u - \mu_{\infty} dx \right|^2 dx  \nonumber \\
       &\le&  \fint_{B_{\rho^k}} |u - \mu_{\infty} |^2 dx   \nonumber
   \end{eqnarray}
    by Jensen's inequality. Finally, 
    $$
         \fint_{B_{\rho^{k}}} |u - u_{B_{\rho^k}} |^2 dx \le 4 \left[ 1+ \left( \frac{C_1}{1 - \rho^{\alpha/2}}\right)^2 \right] \rho^{k\alpha} . 
    $$
    Thus, applying Campanato’s theorem, \cite[Theorem I.2]{Campanato1963} , we conclude that $u$ is H\"older continuous at the origin.  
\end{proof}

We are now ready to prove the main result of this section.

\begin{proof}[Proof of Theorem \ref{Holder reg}]
    Let $\varepsilon_0$ be the number given by Proposition \ref{reg under smallness}. Fix  $\varepsilon \in (0, \varepsilon_0)$ and $x_0 \in B_{1/2}$.  Define 
$$
    \tilde u(y) = \kappa u(x_0 + ry)
$$
where 
$$
    r = \min \left( \frac{dist(x_0, \partial B_1)}{10}, \left( \frac{\varepsilon_0}{\kappa^{2-\gamma} \| f\|_{L^p (B_1)}}\right)^{p/(2p-n)}  \right) \,\,\, \text{and} \, \,\, \kappa =  \left(\frac{1}{\fint_{B_r (x_0)} u^2 dx} \right)^{1/2} . 
$$
Hence $\tilde{u}$ is a minimizer of $J_{\tilde{A}, \tilde{f}}$, with $\tilde A (y) = A(x_0 + r y)$, and  
$$
    \| \tilde f\|_{L^{p,\infty} (B_1)} \le k^{2 -\gamma} r^{2 - n/p} \|f\|_{L^{p,\infty} (B_1)} \le \varepsilon_0.
$$ 
Moreover, 
$$
    \fint_{B_1} \tilde u^2 \,  dx \le 1.
$$
Therefore,  by Proposition \ref{reg under smallness}
$$
    |\tilde u (y) - \tilde u (0) | \le C |y|^{\beta}, 
$$
whenever $|y| < \rho$.  Translating this estimate back to $u$ yields, 
$$
    |u(x_0 + ry) - u(x_0) | \le C \| u\|_{L^2(B_1)} |y|^{\beta} .
$$
This implies that $u$ is H\"older continuous at $x_0$. 
\end{proof}

As a consequence of the continuity of $u$, it follows that $  \{ u > 0\}$ is an open set, and we have the following 
\begin{corollary}
Let $u$ be a minimizer of 
\[
J(v) = \int_B \Bigl( \tfrac12 \langle A\nabla v, \nabla v \rangle + f(x) v_+^\gamma \Bigr)\, dx
\]
over $v \in H_g^1(B)$. Then  $u$ satisfies the Euler-Lagrange equation
\begin{equation}\label{the euler-lagrange equation}
\operatorname{div}(A(x) \nabla u) = \gamma f(x) u^{\gamma-1} 
\quad \text{in } B \cap \{ u>0 \},
\end{equation}
in the weak sense.
\end{corollary}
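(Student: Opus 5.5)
The plan is to derive the Euler--Lagrange equation by first variations that stay inside the positivity set. By Theorem \ref{Holder reg}, $u$ is continuous, so $\Omega^+ := B\cap\{u>0\}$ is open. Fix $\varphi\in C_c^\infty(\Omega^+)$ and set $K=\operatorname{supp}\varphi$. Since $K$ is compact and $u$ is continuous and positive on it, we have $u\ge \delta>0$ on $K$ for some $\delta$. For $|t|<\delta/(2\|\varphi\|_\infty)$ the competitor $u+t\varphi$ belongs to $H^1_g(B)$ and satisfies $u+t\varphi\ge \delta/2$ on $K$. It coincides with $u$ off $K$, so $(u+t\varphi)_+=u+t\varphi$ everywhere relevant. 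Minimality then says that
\[
i(t):=J(u+t\varphi)
\]
has a minimum at $t=0$.

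Next, I differentiate $i$ at $t=0$. The quadratic part is a polynomial in $t$ whose derivative at $0$ is $\int\langle A\nabla u,\nabla\varphi\rangle$. For the potential part we have, on $K$,
\[
\frac{(u+t\varphi)^\gamma-u^\gamma}{t}=\gamma\,\xi_t^{\gamma-1}\varphi
\]
with $\xi_t$ between $u$ and $u+t\varphi$, hence $\xi_t\ge\delta/2$. This difference quotient is therefore bounded by $\gamma(\delta/2)^{\gamma-1}\|\varphi\|_\infty$. Since $f\in L^{p,\infty}\subset L^1(K)$ by Lemma \ref{lemma:weak_embedding} (as $p>n/2\ge1$), dominated convergence gives
\[
\frac{d}{dt}\Big|_{t=0}\int f(u+t\varphi)^\gamma=\int_K \gamma f u^{\gamma-1}\varphi .
\]
Setting $i'(0)=0$ yields
\[
\int\langle A\nabla u,\nabla\varphi\rangle\,dx+\gamma\int f u^{\gamma-1}\varphi\,dx=0
\]
for all $\varphi\in C_c^\infty(\Omega^+)$, which is the weak form of \eqref{the euler-lagrange equation}. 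A density argument extends this to $H^1_0$ test functions with compact support in $\Omega^+$. There $u^{\gamma-1}\le\delta^{\gamma-1}$, and the H\"older/embedding bound of Lemma \ref{lemma:fu_gamma_estimate} type controls $\int|f||\varphi|$ by $\|\varphi\|_{H^1}$.

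The only delicate point is the positive lower bound of $u$ on the support of $\varphi$. This is exactly where continuity, via Theorem \ref{Holder reg}, is essential: it keeps $u^{\gamma-1}$ bounded and makes $(\cdot)_+$ inactive, so the singular term can be differentiated. The rest is routine.
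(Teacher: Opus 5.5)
Your proof is correct and follows the same route as the paper. Continuity from Theorem \ref{Holder reg} makes $\{u>0\}$ open, the variations $u+t\varphi$ with $\varphi$ supported in the positivity set stay admissible and positive, and setting $\frac{d}{dt}J(u+t\varphi)\big|_{t=0}=0$ gives the weak equation; your explicit bound $u\ge\delta$ on $\operatorname{supp}\varphi$ and the dominated convergence step (using $f\in L^1$ via Lemma \ref{lemma:weak_embedding}) also justify differentiating the singular term under the integral, which the paper leaves implicit.
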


\begin{proof}
Since $u$ is continuous, $\{u>0\}$ is an open set. Let $U \subset B$ be an open subset where $u>0$ a.e., and let $\varphi \in C_0^\infty(U)$. Because $u>0$ on the support of $\varphi$, there exists $\varepsilon>0$ such that for all $|t|<\varepsilon$, the variation
\[
u_t := u + t\varphi
\]
remains admissible (in particular, $u_t \ge 0$). 

By minimality of $u$, the map $t \mapsto J(u+t\varphi)$ has a critical point at $t=0$, yielding
\[
0 = \frac{d}{dt} J(u+t\varphi)\Big|_{t=0} 
= \int_B \bigl( \langle A\nabla u, \nabla \varphi \rangle + \gamma f(x) u^{\gamma-1} \varphi \bigr) dx.
\]

Since $\varphi$ is supported in $U$, the integral reduces to $U$:
\[
\int_U \langle A\nabla u, \nabla \varphi \rangle dx + \int_U \gamma f(x) u^{\gamma-1} \varphi dx = 0.
\]

This is precisely the weak formulation of \eqref{the euler-lagrange equation} .
\end{proof}

\section{Regularity along the Free Boundary}\label{sct sharp reg}

In this section, our main goal is to describe the precise geometric behavior of a local minimizer near free boundary points. Roughly speaking, (weak) solutions to \eqref{main equation} should adjust their vanishing  rate according to the singularity of the source term.

We will need the following Caccioppoli-type estimate:
\begin{lem}\label{caccioppoli}
    There is a constant $C> 0$ depending only on $n, \lambda, \Lambda, p, \gamma$, such that any minimizer of $J_{A,f}$ over $H_g^1 (B_1)$ satisfies 
    \begin{equation}\label{cacciop estimate}
        \int\limits_{B_1} |\nabla u|^2 dx \le C \left( \| u \|_{L^2 (B_1)}^{2} + \|f\|_{L^{p,\infty} (B_1)}^{\frac{2}{2-\gamma}} \right). 
    \end{equation}
\end{lem}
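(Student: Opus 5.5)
We compare $u$ with a competitor obtained by subtracting a cut-off multiple of $u$, exactly as in the classical Caccioppoli inequality. The Lorentz-space source is absorbed through Lemma \ref{lemma:fu_gamma_estimate} and Young's inequality, just as in the proofs of Theorem \ref{thm:boundedness} and Lemma \ref{Caccioppoli style}. The plan is written for the energy on the ball where the cut-off equals one. I point out below where passing to the full $\int_{B_1}|\nabla u|^2$ requires more than this argument gives.

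\emph{Step 1 (competitor).} Fix $\eta\in C_0^\infty(B_1)$ with $0\le\eta\le 1$, $\eta\equiv 1$ on the region where the gradient is to be estimated, and $|\nabla\eta|\le C$. Set $v=u-\eta^2 u=(1-\eta^2)u$. Then $v\ge 0$ and $v-u\in H_0^1(B_1)$, so $J(u)\le J(v)$ by minimality. Write $w:=\eta u$. Expanding $\nabla v=(1-\eta^2)\nabla u-2\eta u\nabla\eta$ and using $0\le 1-\eta^2\le1$ together with ellipticity, the quadratic parts give the standard bound
\[
\frac{\lambda}{2}\int \eta^2|\nabla u|^2\,dx\le C\int u^2|\nabla\eta|^2\,dx+\int f\big[((1-\eta^2)u)^\gamma-u^\gamma\big]\,dx .
\]
The terms involving $(1-\eta^2)\eta^2|\nabla u|^2$ are absorbed by Cauchy--Schwarz.

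\emph{Step 2 (source term).} Since $|t^\gamma-s^\gamma|\le|t-s|^\gamma$, the last integral is at most $\int|f|\,(\eta^2u)^\gamma\le\int|f|\,w^\gamma$. Apply \eqref{fu_gamma_estimate} (or \eqref{f-e-2} when $n=2$) to $w\in H_0^1(B_1)$ with $E=B_1$. Poincar\'e's inequality then yields
\[
\int|f|\,w^\gamma\,dx\le C\|f\|_{L^{p,\infty}(B_1)}\|\nabla w\|_{L^2(B_1)}^\gamma .
\]
Moreover $\|\nabla w\|_{L^2}^2\le 2\int\eta^2|\nabla u|^2+2C\int u^2$.

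\emph{Step 3 (absorption).} Young's inequality with exponents $\tfrac{2}{\gamma}$ and $\tfrac{2}{2-\gamma}$ gives
\[
C\|f\|\,X^\gamma\le\delta X^2+C_\delta\|f\|^{\frac{2}{2-\gamma}}, \qquad X:=\|\nabla w\|_{L^2}.
\]
Choosing $\delta$ small relative to $\lambda$, the $\int\eta^2|\nabla u|^2$ contribution is absorbed into the left-hand side. This leaves
\[
\int\eta^2|\nabla u|^2\,dx\le C\Big(\|u\|_{L^2(B_1)}^2+\|f\|_{L^{p,\infty}(B_1)}^{\frac{2}{2-\gamma}}\Big),
\]
which is \eqref{cacciop estimate} on the set where $\eta\equiv1$.

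\emph{Main obstacle.} The difficulty is reaching the full ball $B_1$ in \eqref{cacciop estimate}, since $\eta$ must vanish on $\partial B_1$. As stated, with a constant depending only on $n,\lambda,\Lambda,p,\gamma$, the argument above does not close. Even when $f\equiv0$, a minimizer is $A$-harmonic, and $\int_{B_1}|\nabla u|^2$ is not controlled by $\|u\|_{L^2(B_1)}$ uniformly over all boundary data $g$. My first attempt is to apply Steps 1--3 to a minimizer on a slightly larger ball, or to the rescaled minimizer $u(x_0+r\,\cdot)$ that is used later in Section \ref{sct sharp reg}. That gives the estimate on the unit ball in rescaled coordinates and is the form needed for the free-boundary iteration. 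If $B_1$ itself must be reached, the constant has to be allowed to depend on $g$; in that case I would instead compare $u$ with the competitor $g$ and use the energy bound $\int_{B_1}|\nabla u|^2\le C(\|\nabla g\|_{L^2}^2+\|f\|^{\frac{2}{2-\gamma}})$, which follows from the coercivity estimate in Theorem \ref{thm:existence}.
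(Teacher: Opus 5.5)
Your proof is the paper's argument: the same competitor $(1-\xi^2)u$ with $\xi\equiv1$ on $B_{1/2}$, Cauchy--Schwarz on the cross term, $|t^\gamma-s^\gamma|\le|t-s|^\gamma$ combined with Lemma \ref{lemma:fu_gamma_estimate} and Poincar\'e for $\xi^2u\in H_0^1(B_1)$, and Young's inequality to absorb the gradient term. Your caveat is also correct: the paper's proof likewise only concludes on $B_{1/2}$, where $\xi\equiv1$ (which is all that Lemma \ref{Approx lemma} later uses), and the full-ball statement genuinely fails; for example, for $n=2$, $A=I$, $f\equiv0$, $g=1+\cos(k\vartheta)$, the minimizer $u=1+r^k\cos(k\vartheta)$ has $\int_{B_1}|\nabla u|^2=k\pi$ while $\|u\|_{L^2(B_1)}^2\le 2\pi$.
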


\begin{proof}
    Let $\xi \in C_0^{\infty} (B_1)$ such that $\xi \equiv 1$ in $B_{1/2}$, $\xi \equiv 0 $ in $B_{1} \setminus B_{3/4}$ and $|\nabla \xi | \le C$. Thus $w = u (1 -\xi^2 ) \in H_{g}^1 (B_1)$ and by the minimality of $u$ we have 
    $$
        J(u) \le J(w),
    $$
    which implies: 
    \begin{eqnarray}
        \frac{1}{2} \int_{B_1} (2\xi - \xi^2) \langle A \nabla u, \nabla u\rangle dx &\le& \int_{B_1} 2  u \xi (1- \xi^2) \langle  \nabla u, A \nabla \xi \rangle  - 2  u^2 \langle A \nabla \xi , \nabla \xi \rangle dx  \nonumber \\ 
        && + \int_{B_1} |f| |  \xi^2 u|^\gamma dx \nonumber
    \end{eqnarray}
    By  Cauchy's inequality, 
    \begin{eqnarray}
      \int_{B_1} 2  u \xi (1- \xi^2) \langle  \nabla u, A \nabla \xi \rangle dx  &\le& \int_{B_1} \frac{\lambda}{4} \xi^2 |\nabla u|^2 dx + \frac{4}{\lambda} u^2 |A \nabla \xi|^2 dx. \nonumber
    \end{eqnarray}
     Therefore, by ellipticity 
    \begin{eqnarray}
         \int_{B_1} \frac{\lambda}{2} \xi^2 | \nabla u|^2 dx &\le&  \int_{B_1} \frac{\lambda}{2} (2\xi^2 - \xi^4) |\nabla u|^2 dx \nonumber \\
        &\le& \int_{B_1}  (2\xi - \xi^2) \langle A \nabla u, \nabla u\rangle dx \nonumber\\
        &\le&  \int_{B_1} \frac{\lambda}{4} \xi^2 |\nabla u|^2 dx + \frac{4}{\lambda} u^2 |A \nabla \xi|^2 dx  + \int_{B_1} |f| |  \xi^2 u|^\gamma dx 
    \end{eqnarray}
    
   Next, since $\xi^2 u \in H_0^1 (B_1)$ then by \eqref{fu_gamma_estimate}, we estimate
    \begin{eqnarray}
        \int\limits_{B_1} f |\xi^2  u|^{\gamma} dx &\le& C(n, p, \gamma)\|f\|_{L^{p,\infty} (B_1)} \| \xi^2 u\|_{H^1 (B_1)}^{\gamma} \nonumber \\
        &\le& C \|f\|_{L^{p,\infty} (B_1)} \| \nabla (\xi^2 u)\|_{L^2 (B_1)}^{\gamma} \nonumber \\
        &\le& \frac{C(2-\gamma) \|f\|_{L^{p,\infty} (B_1)}^{2 / (2-\gamma)} }{2\varepsilon} + \frac{\gamma \varepsilon^{2/\gamma}}{2} \| \nabla (\xi^2 u)\|_{L^2(B_1)}^2 \nonumber \\
        &\le& C \left(  \int_{B_1} u^2 dx + \|f\|_{L^{p,\infty} (B_1)}^{\frac{2}{2-\gamma}} \right) + \frac{\gamma \varepsilon^{2/\gamma}}{2}  \int_{B_1}\xi^2 |\nabla u|^2 dx  \nonumber \\
    \end{eqnarray}
    for any $\varepsilon > 0$. Finally, we obtain

    \begin{eqnarray}
         \int_{B_1} \left( \frac{\lambda}{2} - \frac{\lambda}{4} - \frac{\gamma  \varepsilon^{2/\gamma}}{2} \right) \xi^2 |\nabla u|^2 dx \le C \left( \| u\|_{L^2 (B_1)}^2 + \|f\|_{L^{p,\infty} (B_1)}^{2 / (2-\gamma)} \right) \nonumber
    \end{eqnarray}
    
   Since $\xi = 1$ in $B_{1/2}$ then taking $\varepsilon > 0$ so that $\left( \frac{\lambda}{2} - \frac{\lambda}{4} - \frac{\gamma  \varepsilon^{2/\gamma}}{2} \right)> 0 $, the result follows. 
    
\end{proof} 

We now present a key approximation result in the small-data regime.

\begin{lem}[Approximation]\label{Approx lemma}
    Given $\delta > 0$ there exists a $\eta > 0$ depending only on $\delta$, dimension, ellipticity, p, and  $\gamma$, such that if $u$ is a local minimizer of $J_{A,f}$ over $H_g^1 (B_1)$, $g \ge 0$, with $A \in \mathcal{M}_{\lambda, \Lambda} (B_1)$, and $\| f\|_{L^{p,\infty}(B_1)} \le \eta$ , $\fint_{B_1} u^2 dx \le 1$, $u (0) = 0$ then 
    $$
        \sup\limits_{B_{1/4}} u_0 \le \delta.
    $$
    
\end{lem}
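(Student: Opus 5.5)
We argue by contradiction and compactness; note that "$u_0$" in the statement is $u$. Suppose the lemma fails for some $\delta>0$. Then there are sequences $A_k\in\mathcal{M}_{\lambda,\Lambda}(B_1)$, $f_k$ with $\|f_k\|_{L^{p,\infty}(B_1)}\le 1/k$, and nonnegative local minimizers $u_k$ of $J_{A_k,f_k}$ with $\fint_{B_1}u_k^2\le 1$, $u_k(0)=0$, but $\sup_{B_{1/4}}u_k>\delta$.

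The first step is uniform compactness. Lemma \ref{caccioppoli}, applied to $u_k$ in $B_{3/4}$ after rescaling, gives a uniform bound on $\|\nabla u_k\|_{L^2(B_{1/2})}$, since $\|u_k\|_{L^2}$ and $\|f_k\|_{L^{p,\infty}}$ are bounded. More importantly, for $k$ large we have $\|f_k\|_{L^{p,\infty}}\le\varepsilon_0$. Hence the proof of Theorem \ref{Holder reg} (Proposition \ref{reg under smallness} applied at every point of $B_{1/2}$ after translation and scaling) gives a uniform bound on $\|u_k\|_{C^{0,\beta}(B_{1/2})}$ in terms of $\|u_k\|_{L^2(B_1)}\le C$. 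By Arzel\`a--Ascoli, after passing to a subsequence, $u_k\to u_\infty$ uniformly on $\overline{B_{1/4}}$ and weakly in $H^1(B_{1/2})$. The limit satisfies $u_\infty\ge0$, $u_\infty(0)=0$ and $\sup_{B_{1/4}}u_\infty\ge\delta$.

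Next I would identify the limit equation. Let $h_k$ be the $A_k$-harmonic replacement of $u_k$ in $B_{1/2}$. Lemma \ref{Caccioppoli style} and the gradient bound \eqref{ineq with gradient} give
\[
\|\nabla(u_k-h_k)\|_{L^2(B_{1/2})}^{2-\gamma}\le C\|f_k\|_{L^{p,\infty}}\to0 .
\]
So $u_k-h_k\to0$ in $H^1_0(B_{1/2})$. The $h_k$ are uniformly $C^{0,\alpha_0}$ on $B_{1/4}$ by \eqref{continuity of h}, so $h_k\to u_\infty$ uniformly on $B_{1/4}$ as well (first in $L^2$, then uniformly by equicontinuity). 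The functions $h_k$ are nonnegative solutions of $\mathrm{div}(A_k\nabla h_k)=0$, since $h_k\ge0$ by the maximum principle because $u_k\ge0$. The key tool is the Harnack inequality for divergence-form equations with measurable coefficients (De Giorgi--Nash--Moser), whose constant depends only on $n,\lambda,\Lambda$. It gives
\[
\sup_{B_{1/4}}h_k\le C_H\inf_{B_{1/4}}h_k\le C_H\,h_k(0).
\]
Since $h_k(0)\to u_\infty(0)=0$, we get $\sup_{B_{1/4}}h_k\to0$. Therefore $\sup_{B_{1/4}}u_\infty=0$, which contradicts $\sup_{B_{1/4}}u_\infty\ge\delta$. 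A slightly larger ball such as $B_{3/8}$ can be used for the Harnack step so that $B_{1/4}$ sits compactly inside.

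The main obstacle is that the coefficients $A_k$ are merely measurable, so we cannot pass to a limit operator in a naive way. That is why I avoid identifying a limit equation altogether. Instead I work directly with the replacements $h_k$ and use the uniform Harnack constant. The remaining delicate point is ensuring the uniform H\"older bound holds up to $\overline{B_{1/4}}$, so that $u_k(0)=0$ and $\sup_{B_{1/4}} u_k>\delta$ pass to the limit. This follows from Proposition \ref{reg under smallness} applied with a fixed scale $r\sim1/10$ around each point of $B_{1/2}$, since the smallness of $\|f_k\|_{L^{p,\infty}}$ makes the scale restriction in the proof of Theorem \ref{Holder reg} harmless. A constructive variant that avoids compactness uses the bound $|u_k-h_k|\le C\eta^{1/(2-\gamma)}$ in $L^2$ together with uniform H\"older continuity to upgrade to sup closeness, and then applies Harnack. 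I would present the contradiction version.
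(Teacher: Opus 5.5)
Your proof is correct, but it takes a different route from the paper's at the decisive step.

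The paper also argues by contradiction and compactness. After extracting $u_k\to u_\infty$, it takes a weak $L^2$ limit $A_\infty$ of the coefficients. It then passes to the limit in $J_k(u_k)\le J_k(u_k+v)$ to assert that $u_\infty$ solves $\mathrm{div}(A_\infty\nabla u_\infty)=0$ in $B_{1/2}$, and applies Harnack to $u_\infty$.

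You never identify a limit operator. Instead you:
\begin{itemize}
\item keep the $A_k$-harmonic replacements $h_k$;
\item use Lemma \ref{Caccioppoli style}, whose proof works verbatim on $B_{1/2}$, to get $\|\nabla(u_k-h_k)\|_{L^2(B_{1/2})}\to0$;
\item transfer the convergence to $h_k$ via \eqref{continuity of h};
\item apply the Harnack inequality to each nonnegative $h_k$, with a constant depending only on $n,\lambda,\Lambda$.
\end{itemize}

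What your route buys is rigor with merely measurable coefficients. The paper's passage to the limit in $\int\langle A_k\nabla u_k,\nabla v\rangle$ rests only on weak convergence of both $A_k$ and $\nabla u_k$, and this does not justify the claimed limit. The term $\Lambda\int|\nabla u_k-\nabla u_\infty|\,|\nabla v|$ need not tend to zero under weak convergence. Worse, the weak limit of $A_k$ is in general not the operator that $u_\infty$ actually satisfies: periodic laminates produce the harmonic mean across the layers, not the arithmetic mean. Repairing the paper's route requires $H$-convergence (compactness of $\mathcal{M}_{\lambda,\Lambda}$ in that topology). Yours needs only De Giorgi--Nash--Moser with uniform constants, plus the uniform H\"older bound for $u_k$. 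You correctly take that bound only in the small-$\|f_k\|_{L^{p,\infty}}$ regime, where the scale in the proof of Theorem \ref{Holder reg} can be fixed.

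The paper's approach, once repaired, has the merit of producing a limiting equation that can be reused in other blow-up arguments. Your constructive variant makes the lemma quantitative: $L^2$-closeness of $u$ and $h$, upgraded to sup-closeness by the two H\"older bounds, then Harnack for $h$. This gives an explicit dependence of $\eta$ on $\delta$, which the compactness argument does not.
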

\begin{proof}
    Suppose by contradiction that there is an $\delta_0 > 0$ and  sequences  $g_k \in H^1(B_1) \cap L^{\infty} (B_1)$, $f_k \in L^{p,\infty}(B_1)$,  $A_k \in \mathcal{M}_{\lambda, \Lambda} (B_1)$, and $u_k \in H_{g_k} ^1 (B_1)$ such that
    \begin{enumerate}
        \item $\fint_{B_1} u_k^2 dx \le 1$, $u_k(0) = 0$;

        \item $\|f_k \|_{L^{p,\infty} (B_1)} \le \frac{1}{k}$;

        \item $u_k$ is a local minimizer of 
        $$
            J_k (v) = \int\limits_{B_1} \left\{ \frac{1}{2} \langle A_k \nabla v, \nabla v\rangle + f_k (x) (v)_+^{\gamma} \right\} dx.
        $$
    \end{enumerate}
        However, 
        \begin{equation}\label{contradicao eq}
            \sup\limits_{B_{1/4}} u_k \ge \delta_0 .
        \end{equation}
        By the universal H\"older estimates obtained in the previous section, $u_k \to u_{\infty}$ locally uniformly in $B_{1/2}$. By the Cacciopoli estimate \ref{cacciop estimate}, $u_k$ is bounded in $H_{g_k}^1 (B_1)$, and thus, up to a sequence,
        $$
            u_k \to u_{\infty}\,\, \text{weakly in }\,\,  H^1 (B_{1/2}), \quad u_k \to u_{\infty} \,\,\,  \text{in} \,\,\,  L^2(B_{1/2}) .
        $$
        Moreover, by ellipticity, the sequence $(A_k)$ is uniformly bounded in $L_{loc}^2(B_{1/2}, \mathbb{R}^{n\times n} ).$ Hence, there exists a subsequence (still denoted $A_k$) such that $A_k \to A_{\infty}$ weakly in $L_{loc}^2 (B_{1/2} , \mathbb{R}^{n \times n} )$ to some $A_{\infty} \in \mathcal{M}_{\lambda, \Lambda} (B_{1/2})$.  

        We want to verify that $u_{\infty}$ is a weak solution to 
        \begin{equation}\label{limiting PDE}
            \D(A_{\infty} \nabla u ) = 0 \quad \text{in} \quad B_{1/2}.
        \end{equation}
        Indeed, for any $ v \in C_0^{\infty} (B_{1/2})$, we have 
        $$
            J_{k}(u_{k}) \le J_k (u_k + v )
        $$
     Thus, 
        \begin{eqnarray}
           0  & \le  & \int\limits_{B_{1/2}} 2\langle A_k \nabla u_k , \nabla v \rangle +  \langle A_k \nabla v, \nabla v \rangle dx + \int\limits_{B_{1/2}} 2f_k \left[ (u_k + v)_+^{\gamma} - (u_k)_+^{\gamma} \right] dx \nonumber \\
            &=& \text{I}_k + \text{II}_k, \nonumber
        \end{eqnarray}
        where 
        $$
            \text{I}_k =  \int\limits_{B_{1/2}} 2\langle A_k \nabla u_k , \nabla v \rangle + \langle A_k \nabla v, \nabla v \rangle dx 
        $$
        and 
        $$
           \text{II}_k =  \int\limits_{B_{1/2}} 2f_k \left[ (u_k + v)_+^{\gamma} - (u_k)_+^{\gamma} \right] dx. 
        $$
        Once again, using \eqref{lemma:weak_embedding}, 
        $$
           |  \text{II}_k |\le C(n, p,\gamma) \| f_k\|_{L^{p,\infty} (B_1)} \| v\|_{L^2 (B_1)}^{\gamma} \to 0 \,\, \text{as} \,\, k \to \infty. 
        $$
        The second term of $\text{I}_k$ converges to $\int\limits_{B_{1/2}} \langle A_{\infty} \nabla v, \nabla v \rangle dx$ by the weak convergency of $A_k$. For the first term of $\text{I}_k$ we have 
        \begin{eqnarray}
            \int\limits_{B_{1/2}} \langle A_k \nabla u_k , \nabla v \rangle dx = \int\limits_{B_{1/2}} \langle A_k ( \nabla u_k - \nabla u_{\infty}) , \nabla v \rangle dx + \int\limits_{B_{1/2}} \langle A_k \nabla u_{\infty} , \nabla v \rangle dx. \nonumber
        \end{eqnarray}
        Therefore, 
       \begin{eqnarray}
             \lim\limits_{k \to \infty}  \int\limits_{B_{1/2}} \langle A_k \nabla u_k , \nabla v \rangle dx &\le&  \lim\limits_{k \to \infty}  \int\limits_{B_{1/2}} \Lambda |\nabla u_{k} - \nabla u_{\infty}| \, |\nabla v| dx   \nonumber \\
             && \quad +  \lim\limits_{k \to \infty} \int\limits_{B_{1/2}} \langle A_k \nabla u_{\infty} , \nabla v \rangle dx \nonumber \\
            &=& 0 \quad +  \quad \int\limits_{B_{1/2}} \langle A_{\infty} \nabla u_{\infty} , \nabla v \rangle dx \nonumber
       \end{eqnarray}
        by the weak convergency of $\nabla u_k$ to $\nabla u_{\infty}$ and $A_k$ to $A_{\infty}$. 
       
 Therefore, 
 $$
    0 \le   \lim\limits_{k \to \infty}  \text{I}_k + \text{II}_k \le  \int\limits_{B_{1/2}} \langle A_{\infty} \nabla u_{\infty} , \nabla v \rangle dx + \frac{1}{2}\int\limits_{B_{1/2}} \langle A_{\infty} \nabla v, \nabla v \rangle dx
 $$
 which implies
 $$
    0 \le \frac{1}{2}\int\limits_{B_{1/2}} \langle A_{\infty} \nabla (u_{\infty} + v ), \nabla (u_{\infty} + v ) \rangle dx  -  \frac{1}{2}\int\limits_{B_{1/2}} \langle A_{\infty} \nabla u_{\infty} , \nabla u_{\infty} \rangle dx ,
 $$
 and therefore, $u_{\infty}$ satisfies \eqref{limiting PDE} in the weak sense.

To conclude, we note that since the trace of $u_{\infty}$ on $\partial B_{1/2}$ is nonnegative, then $u_{\infty} \ge 0$ in $B_{1/2}$. Therefore, since $u_{\infty} ( 0) = 0$, then by Harnack Inequality (or Maximum principle), $u_{\infty} \equiv 0$ in $B_{1/4}$. Thus, contracting \eqref{contradicao eq} for $k \gg 1$, sufficiently large. 
 \end{proof}

In what follows, we show that $u$ is $C^{\theta}$ at the origin, regardless of the regularity of $A(x)$, for the sharp exponent 
$$
    \theta = \frac{2 - n/p}{2 - \gamma} . 
$$

\begin{proof}[Proof of Theorem \ref{improv reg}]
We recall that we are assuming $0$ is a free boundary point,  $u(0) = 0$. For $ 0 < \rho_{*} \le 1/8$ to be chosen later, consider the function $v(x) = \kappa u( \rho_{*} x) $ defined in $B_1$. It is easily checked that $v$ minimizes 
    $$
        \int_{B_{1}} \left\{ \frac{1}{2} \langle \tilde{A} \nabla v, \nabla v \rangle + \tilde{f} (v)_+^{\gamma} \right\}  dx
    $$
where $\tilde{A} (x) = A(\rho x)$, is a $(\lambda, \Lambda)$- uniformly elliptic matrix,  and  
$$
    \tilde{f}(x)= \kappa^{2 - \gamma} \rho^{2} f(\rho x).
$$ 

We select $\delta = 4^{-\theta}$ and consider the correspondent   $\eta$ from Lemma \ref{Approx lemma}. Next, we choose,  $\rho \ll 1$ such that 
\begin{equation}\label{small regime loc min}
    0 < \rho_{*} \le \left( \eta \| f\|_{L^{p,\infty} (B_1)}^{-1} \right)^{\frac{p}{(2p -n )}} \quad \text{and} \quad \kappa^2 \le \min\{ 1, \rho_*^n (\fint_{B_1} u^2 dx )^{-1} \}
\end{equation}

With these choices, $v$ is under the assumptions of Lemma \ref{Approx lemma}. Thus, 
$$
    \sup\limits_{B_{1/4}}  v(x)  \le 4^{-\theta} . 
$$
In the sequel, assume by induction that for some $k \in \mathbb{N}$, 
\begin{equation}\label{geometric decay}
    \sup\limits_{B_{4^{-k}}} v \le \frac{1}{4^{k \theta} }. 
\end{equation}
Let us define $v_k: B_1 \to \mathbb{R}$ by 
$$
    v_k(x) = 4^{k\theta} v \left(\frac{x}{4^k} \right)   .
$$
We verify that $v_k$ minimizes 
$$
     \int_{B_{1}} \left\{ \frac{1}{2} \langle A_k \nabla v, \nabla v \rangle + f_k  (v)_+^{\gamma} \right\}  dx
$$
where $A_k (x) = \tilde{A}(4^{-k} x) $, $f_k(x) = 4^{k[ \theta ( 2-\gamma) - 2 ]} \tilde{f}(4^{-k} x )$. We note that $A_k$ is still a $(\lambda, \Lambda)$-elliptic matrix, and because of the sharp choice of $\theta$, 
$$
    \| f_k \|_{L^{p,\infty}(B_1)} = \| \tilde{f} \|_{L^{p,\infty}(B_1)} \le \eta .  
$$
Moreover, the inductive hypothesis gives $\fint_{B_1} v_k^2 dx \le 1$.  By Lemma \ref{Approx lemma}, 
$$
    \sup\limits_{B_{4^{-1}}} v_k \le \frac{1}{4^{\theta}}
$$
or, 
$$
    \sup\limits_{B_{4^{-{(k+1)}}}} v \le \frac{1}{4^{(k+1) \theta}}. 
$$
This proves \eqref{geometric decay} for all $k \ge 1$. Now, for any $x \in B_{1/4}$, there exists $k \ge 1$ such that
$$
    4^{-(k+1)} \le |x| \le 4^{-1}. 
$$
Thus, 
$$
v(x) \le \sup_{B_{4^{-k}}} v \le 4^{-k\theta} \le 4^{\theta} |x|^{\theta}. 
$$
Translating back to $u$ yields,
$$
    u(y) \le C(\kappa, \rho_*, \theta ) |y|^{\theta}, \,\, \forall y \in B_{\frac{\rho_*}{4}} . 
$$
\end{proof}

\section{Nondegeneracy}\label{sct nondeg}

In this section, we establish Theorem \ref{non-deg theom} and Theorem \ref{non-de-2}.

\begin{proof}[Proof of Theorem \ref{non-deg theom}]
Given $x_0 \in \{ u > 0\}$, let $2r$ be the distance from $x_0$ to the free boundary. Suppose the point that $B_{4r}(x_0)$ touches the free boundary is $y_0$. Trivially $x_0\in B_{4r_0}(y_0)$.
$$
    2r = |x_0  - y_0| = dist( x_0 , \partial\{u > 0\} ).
$$ Note that $u $ solves 
$$
        \D (A(x) \nabla u) = \gamma f(x) u^{\gamma - 1} \quad \text{in} \quad B_{2r}(x_0) ,
    $$
    
 Let $\eta \in C_0^\infty(B_{\frac 32r}(x_0))$ with $\eta \equiv 1$ on $B_r$ and $|\nabla \eta| \lesssim r^{-1}$.
    Multiplying the equation for $u$ by  $\eta^2 u^{1-\gamma}$ and integrating, we have:
    \[
        \int_{B_{2r}(x_0)} \D (A(x) \nabla u) \,\eta^2 u^{1-\gamma} \,dx = \int_{B_{2r}(x_0)} \gamma f(x) \, \eta^2 \,dx.
    \]
   Clearly integration by parts yields the energy inequality:
    \begin{align*}
         &\gamma \int_{B_{2r}(x_0)}  f(x) \eta^2 dx  \\
         =& - \int_{B_{2r}(x_0)} \langle A\nabla u , \nabla ( \eta^2 u^{1- \gamma}) \rangle dx \nonumber \\
         =& -\int_{B_{2r}(x_0)} \eta^2 (1-\gamma) u^{-\gamma} \langle A(x)\nabla u, \nabla u\rangle |^2 + 2\eta u^{1-\gamma} \langle A\nabla u , \nabla \eta \rangle\, dx \nonumber
    \end{align*}
    Rewriting in terms of $w:= u^{(2-\gamma)/2}$ yields, 
    \begin{align*}
         &\gamma \int_{B_{2r}(x_0)}  f(x) \eta^2 dx\\
         =&  - \int_{B_{2r}(x_0)} \eta^2 \frac{4 (1-\gamma)}{(2 - \gamma)^2}  \langle A\nabla w , \nabla w\rangle  +  \frac{4}{(2 - \gamma)}\eta w \langle A \nabla w , \nabla \eta \rangle\, dx \nonumber \\
         \le& - \frac{4 (1-\gamma)\lambda}{(2 - \gamma)^2} \int_{B_{2r}(x_0)} \eta^2   |\nabla w |^2+  
         \frac{2 \Lambda}{(2 - \gamma)}  \int_{B_{2r}(x_0)} (\delta^2 w^2 |\nabla \eta |^2 + \frac{1}{\delta^2} \eta^2 |\nabla w|^2) \nonumber \\
          \le & \frac{2 \Lambda}{(2 - \gamma)}  \int_{B_{2r}(x_0)} \delta^2 w^2 |\nabla \eta|^2 dx \nonumber
    \end{align*}
by choosing $\delta > 0$ so that 
    $$
        \frac{1}{\delta^2} \frac{2\Lambda}{(2 - \gamma)}  = \frac{4(1-\gamma)\lambda}{(2 - \gamma)^2}. 
    $$
    and using Cauchy-Schwartz inequality. Therefore,
    \begin{align}\label{key-est-n}
         \gamma \int_{B_{2r}(x_0)}  f(x) \eta^2 dx & \le \frac{2\Lambda \delta^2}{(2 - \gamma)} \int_{B_{2r}(x_0)}  w^2 |\nabla \eta|^2 dx  \\
         &\le c(\Lambda, \gamma) r^{-2} \int_{B_{\frac 32r}(x_0)} w^2 dx  \nonumber \\
         &=  c(\Lambda, \gamma) r^{-2} \int_{B_{\frac 32r}(x_0)} u^{2- \gamma} dx \nonumber \\
          \end{align}

Since $f\ge C_0$, we have 
\[\int_{B_{\frac 32r}(x_0)}u^{2-\gamma}\ge C(\Lambda,\gamma, C_0,n)r^{n+2}.\]
Therefore there exist $x_1\in B_{\frac 32 r}(x_0)$ such that 
\[u(x_1)^{2-\gamma}\ge \fint_{B_{\frac 32 r}(x_0)}u^{2-\gamma}>Cr^2.\]
Notice that the distance from $x_1$ to the free boundary is comparable to $r$, thus
Theorem \ref{non-deg theom} is established. $\Box$

\medskip

\noindent{\bf Proof of Theorem \ref{non-de-2}:}   The difference of the proof mainly starts from (\ref{key-est-n}). Of course in this theorem we know $y_0\in \partial \{u>0\}$ and $B_{4r}(y_0)\in B_1$. We start from (\ref{key-est-n}).     Now we estimate the left hand side differently:
         \[\gamma \int_{B_{2r}(x_0)}f(x)\eta^2dx=\gamma \int_{B_{2r}(x_0)}f_+\eta^2dx-\gamma \int_{B_{2r}(x_0)}f_-\eta^2dx.\]
    \begin{align*}
         &\gamma \int_{B_{2r}(x_0)}  f_+(x) \eta^2 dx 
         \ge \gamma \int_{B_{r}(x_0)}  f_+(x)  dx \nonumber \\
         \ge& \gamma r^{-n/p} \left| \{ f  > r^{-n/p} \} \cap B_{r} (x_0)  \right|\nonumber \\
         =& \gamma r^{-n/p}  \left( \left| \{ f  > r^{-n/p} \} \cap B_{4r} (y_0) \right| - \left| \{ f  > r^{-n/p} \} \cap B_{4r} (y_0) \setminus B_r(x_0) \right| \right) \nonumber \\
         \ge& \gamma r^{-n/p}  \left( \tau  - \iota \right) |B_{4r} (y_0) |\nonumber
    \end{align*} 
   where
    $$
        \iota : = \frac{| B_{4r} (y_0) \setminus B_r(x_0) | }{|B_{4r}(y_0)|} = \frac{(4^n - 1)}{4^n} . 
    $$
    Now choose $ \iota < \tau   < 1$. On the other hand, 
    \[-\gamma \int_{B_{2r}(x_0)}f_-\eta^2dx\ge -\varepsilon r^{n-\frac{n}p}.\]
    
    Therefore, we reach 
    \begin{equation}
        \tilde{c}(\Lambda, \gamma, n) r^{(2 - n/p) } \le \fint_{B_{\frac 32 r}(x_0)} u^{2 - \gamma} dx 
    \end{equation}
    Then just like the proof of Theorem \ref{non-deg theom} we find $x_1\in B_{\frac 32 r}(x_0)$ whose distance to the free boundary is obviously comparable to $r$ such that
    \begin{equation}
          \tilde{c}(\Lambda, \gamma, n) r^{(2 - n/p) }  \le  u(x_1)^{2 - \gamma}.  \nonumber
    \end{equation}
 Since $x_0$ was taken arbitrary on $\partial B_{2r}(y_0)$, then by taking the supremum we reach \eqref{non-degen-add-2}. 
Theorem \ref{non-de-2} is established. 
\end{proof}

Theorem \ref{non-de-2} provides a non-degeneracy rate strong enough as to guarantee positive density at free boundary points where the source function has sufficient mass concentrated.

\begin{theorem}(Positive density)
    Assume $u$ is a minimizer of \eqref{main functional} and let $y_0 \in \partial \{ u > 0\} \cap B_{1/2}$ such that \eqref{PosMass} holds, for some $\tau , r_0 \in (0,1)$. Then there exists $\tau_0 > 0$, depending on $n, \tau, \gamma$,  such that 
    \begin{equation}
        \frac{|B_r(y_0) \cap \{ u > 0 \} |}{|B_r(y_0)|} \ge \tau_0
    \end{equation}
    for all $ 0 < r < r_0$. 
\end{theorem}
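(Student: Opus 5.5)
The proof combines the sharp nondegeneracy of Theorem \ref{non-de-2} with the sharp growth of Theorem \ref{improv reg}; both have the same exponent $\theta=\frac{2-n/p}{2-\gamma}$, which is what makes the argument close. We also assume the negative-part smallness condition of Theorem \ref{non-de-2} is in force, since the nondegeneracy estimate needs it.

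\emph{Step 1: a positive point.} Fix $0<r<r_0$, with $r$ also small enough that $r<\rho_*/4$ from Theorem \ref{improv reg}. The remaining range of $r$, up to $r_0$, only changes the constant $\tau_0$ by a fixed factor, because the density ratio at radius $r$ dominates a fixed multiple of the ratio at a comparable smaller radius. Apply Theorem \ref{non-de-2} at the scale $sr$, where $s\in(0,1/2)$ is a small constant chosen in Step 3. This gives a point $x_1$ with $|x_1-y_0|\le 2sr$ and
\[
u(x_1)\ge \tilde c\,(sr)^{\theta}.
\]

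\emph{Step 2: an upper bound near free boundary points.} For any free boundary point $z$ near $y_0$ (with $z\in B_{1/2}$, or after a harmless rescaling), Theorem \ref{improv reg} gives
\[
u(x)\le C_0|x-z|^{\theta},
\]
where $C_0=C\|u\|_{L^2(B_1)}$ is controlled through Theorem \ref{thm:boundedness}. Set $d=\operatorname{dist}(x_1,\partial\{u>0\})$ and let $z$ be a closest free boundary point. Then
\[
\tilde c\,(sr)^\theta\le u(x_1)\le C_0 d^{\theta},
\]
so $d\ge c_1 sr$ with $c_1=(\tilde c/C_0)^{1/\theta}$. Since $d\le|x_1-y_0|\le 2sr<r/2$, we may take $c_1\le 1$.

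\emph{Step 3: conclusion.} The ball $B_{d}(x_1)$ lies in $\{u>0\}$. Its center satisfies $|x_1-y_0|\le 2sr$ and its radius satisfies $d\le 2sr$, so $B_d(x_1)\subset B_{4sr}(y_0)\subset B_r(y_0)$ once $s\le 1/4$. Therefore
\[
|B_r(y_0)\cap\{u>0\}|\ge |B_{c_1 sr}|=(c_1s)^n|B_r|,
\]
and we take $\tau_0=(c_1s)^n$.

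\emph{Main obstacle.} The delicate point is the constant $C_0$ in Theorem \ref{improv reg}. It depends on $\|u\|_{L^2(B_1)}$ and $\|f\|_{L^{p,\infty}}$, so $\tau_0$ inherits a dependence on these quantities, not only on $n,\tau,\gamma$. I will state this dependence honestly. I also need Theorem \ref{improv reg} to apply uniformly at free boundary points $z\in B_{3/4}$ near $y_0$, which I would obtain by a covering and rescaling argument.
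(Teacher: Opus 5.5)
Your proof is correct and follows the same route as the paper: use Theorem~\ref{non-de-2} to find a point where $u \gtrsim r^{\theta}$, compare with the growth bound of Theorem~\ref{improv reg} at the nearest free boundary point to get $\operatorname{dist}(x_1,\partial\{u>0\}) \gtrsim r$, and place a ball of comparable radius inside $\{u>0\}$. The paper's write-up glosses over or slightly mishandles several points that you treat correctly: applying nondegeneracy at scale $sr$ so the ball lands in $B_r(y_0)$ rather than $B_{2r}(y_0)$, the orientation $c_1=(\tilde c/C_0)^{1/\theta}$, the need for the negative-part hypothesis of Theorem~\ref{non-de-2}, and the dependence of $\tau_0$ on $\|u\|_{L^2}$ and $\|f\|_{L^{p,\infty}}$.
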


\begin{proof}
    The proof follows a standard geometric argument, and we carry it here for completeness. Let $ 0 < r \le \min\{ \frac{\rho_*}{4}, r_0\}$, where $\rho_*, r_0 > 0$ are the constants granted by Theorem \ref{improv reg} and Theorem \ref{non-de-2}, respectively. By the non-degeneracy (Theorem \ref{non-de-2}) there exists $y \in \partial B_r (y_0)$ such that 
    \begin{equation}\label{non-deg - 3}
        u(y) \ge \tilde{c} r^{\theta}, \quad \theta = \frac{2 - n/p}{2 - \gamma}. 
    \end{equation}
    Let $z \in \partial \{ u > 0 \}$ such that $d: = |y-z| = dist(y, \partial\{ u > 0\} ). $ Thus from the sharp regularity (Theorem \ref{improv reg}), we have
    \begin{equation}\label{thm2 + non-deg}
         \tilde{c} r^{\theta} \le u(y) \le \sup_{B_{2d}(z)} u \le C d^{\theta} .
    \end{equation}
     This implies
    $$
        r \le \left( \frac{C}{\tilde{c}}\right)^{\frac{1}{\theta}} d \le \max\{ 1, \left( \frac{C}{\tilde{c}}\right)^{\frac{1}{\theta}} \} \, d.
    $$
    Next, by letting $\mu = \min\{ 1, \left( \frac{C}{\tilde{c}}\right)^{\frac{1}{\theta}}  \}$, then 
    $$
        B_{\mu \, r} (y) \subset B_d (y) \subset \{ u > 0 \}.
    $$
    Moreover, since $d \le r$ then for $z \in B_d(y)$, $|z- y_0| \le |z - y| + | y - y_0| < d + r \le 2r$, thus
    $$
     B_{\mu \, r} (y) \subset B_d (y) \subset B_{2r}(z_0).
    $$
    We conclude that 
    \begin{eqnarray}
        |B_{2r}(y_0) \cap \{ u > 0\} | &\ge& |B_{\mu \, r}(y)| \nonumber \\
        &=& \mu^n r^n \alpha(n) \nonumber \\
        &=& \left( \frac{\mu}{2}\right)^n (2r)^n \alpha(n), \nonumber
    \end{eqnarray}
    where $\alpha(n) = |B_1|$. Thus, by taking $\tau_0 = \left( \frac{\mu}{2}\right)^n$ the proof is complete.

\end{proof}

\section{A radial example} \label{sct radial example}

In this final section, we construct a radial solution to illustrate 
the sharpness of our regularity and non-degeneracy results. By 
considering a source term with a calibrated singularity at the 
origin, we demonstrate that the $L^{p,\infty}$ scaling is the 
critical threshold for maintaining the geometric properties of the 
free boundary.

 Let 
\[
u(x) = |x|^{\alpha}, \qquad \alpha > 0.
\] 
A direct computation shows that $u$ satisfies 
\[
\Delta u = f(x) u^{\gamma -1}, \quad \text{in} \quad \{u > 0\}
\]
with
\[
f(x) = \alpha (n + \alpha - 2)\, |x|^{\alpha(2 - \gamma) - 2}.
\]

To analyze the integrability properties of $f$, we introduce the exponent
\[
-\beta := \alpha(2 - \gamma) - 2.
\]
In this notation,
\[
f(x) \simeq |x|^{-\beta}.
\]
For each $t > 0$, the distribution function of $f$ can be computed explicitly:
\[
\{ f(x) > t \} = \{ |x|^{-\beta} > t \} = \big\{ |x| < t^{-1/\beta} \big\}.
\]
Hence,
\[
t \, \big| \{ f(x) > t \} \big|^{1/p}
= t \, |B_{t^{-1/\beta}}|^{1/p}
= |B_1|^{1/p}\, t^{\,1 - \tfrac{n}{\beta p}}.
\]
From the characterization of weak $L^p$ spaces, it follows that 
\[
f \in L^{p,\infty} (B_1) \quad \Longleftrightarrow \quad 1 - \tfrac{n}{\beta p} = 0,
\]
that is,
\[
\beta = \frac{n}{p}.
\]
Recalling the definition of $\beta$, this condition is equivalent to
\[
\alpha = \frac{2 - n/p}{2 - \gamma}.
\]



\bibliographystyle{plain}
\bibliography{biblio2}

\end{document}